\newtheorem{Remark}{Remark}
\newtheorem{Corollary}{Corollary}
\newtheorem{Theorem}{Theorem}
\newtheorem{Assumption}{Assumption}
\let\pdfoutput=\undefined\fi
\chardef\@x10\chardef\@xv60
\def\tcitime{
\def\@time{%
  \@minute\time\@hour\@minute\divide\@hour\@xv
  \ifnum\@hour<\@x 0\fi\the\@hour:%
  \multiply\@hour\@xv\advance\@minute-\@hour
  \ifnum\@minute<\@x 0\fi\the\@minute
  }}%
\def\x@hyperref#1#2#3{%
   \catcode`\~ = 12
   \catcode`\$ = 12
   \catcode`\_ = 12
   \catcode`\# = 12
   \catcode`\& = 12
   \catcode`\% = 12
   \y@hyperref{#1}{#2}{#3}%
}
\def\y@hyperref#1#2#3#4{%
   #2\ref{#4}#3
   \catcode`\~ = 13
   \catcode`\$ = 3
   \catcode`\_ = 8
   \catcode`\# = 6
   \catcode`\& = 4
   \catcode`\% = 14
}
\def\QCTOpt[#1]#2{%
  \def\QCTOptB{#1}
  \def\QCTOptA{#2}
}
\def\QCTNOpt#1{%
  \def\QCTOptA{#1}
  \let\QCTOptB\empty
}
\def\Qct{%
  \@ifnextchar[{%
    \QCTOpt}{\QCTNOpt}
}
\def\QCBOpt[#1]#2{%
  \def\QCBOptB{#1}%
  \def\QCBOptA{#2}%
}
\def\QCBNOpt#1{%
  \def\QCBOptA{#1}%
  \let\QCBOptB\empty
}
\def\Qcb{%
  \@ifnextchar[{%
    \QCBOpt}{\QCBNOpt}%
}
\def\PrepCapArgs{%
  \ifx\QCBOptA\empty
    \ifx\QCTOptA\empty
      {}%
    \else
      \ifx\QCTOptB\empty
        {\QCTOptA}%
      \else
        [\QCTOptB]{\QCTOptA}%
      \fi
    \fi
  \else
    \ifx\QCBOptA\empty
      {}%
    \else
      \ifx\QCBOptB\empty
        {\QCBOptA}%
      \else
        [\QCBOptB]{\QCBOptA}%
      \fi
    \fi
  \fi
}
\def\GRAPHICSPS#1{%
 \ifcase\GRAPHICSTYPE
   \special{ps: #1}%
 \or
   \special{language "PS", include "#1"}%
 \fi
}%
\def\graffile#1#2#3#4{%
    \bgroup
	   \@inlabelfalse
       \leavevmode
       \@ifundefined{bbl@deactivate}{\def~{\string~}}{\activesoff}%
        \raise -#4 \BOXTHEFRAME{%
           \hbox to #2{\raise #3\hbox to #2{\null #1\hfil}}}%
    \egroup
}%
\def\draftbox#1#2#3#4{%
 \leavevmode\raise -#4 \hbox{%
  \frame{\rlap{\protect\tiny #1}\hbox to #2%
   {\vrule height#3 width\z@ depth\z@\hfil}%
  }%
 }%
}%
\let\nographics=\@msidraft
\newif\ifwasdraft
\def\GRAPHIC#1#2#3#4#5{%
   \ifnum\@msidraft=\@ne\draftbox{#2}{#3}{#4}{#5}%
   \else\graffile{#1}{#3}{#4}{#5}%
   \fi
}
\def\addtoLaTeXparams#1{%
    \edef\LaTeXparams{\LaTeXparams #1}}%
\newif\ifBoxFrame \BoxFramefalse
\newif\ifOverFrame \OverFramefalse
\newif\ifUnderFrame \UnderFramefalse
\def\BOXTHEFRAME#1{%
   \hbox{%
      \ifBoxFrame
         \frame{#1}%
      \else
         {#1}%
      \fi
   }%
}
\def\doFRAMEparams#1{\BoxFramefalse\OverFramefalse\UnderFramefalse\readFRAMEparams#1\end}%
\def\readFRAMEparams#1{%
 \ifx#1\end%
  \let\next=\relax
  \else
  \ifx#1i\dispkind=\z@\fi
  \ifx#1d\dispkind=\@ne\fi
  \ifx#1f\dispkind=\tw@\fi
  \ifx#1t\addtoLaTeXparams{t}\fi
  \ifx#1b\addtoLaTeXparams{b}\fi
  \ifx#1p\addtoLaTeXparams{p}\fi
  \ifx#1h\addtoLaTeXparams{h}\fi
  \ifx#1X\BoxFrametrue\fi
  \ifx#1O\OverFrametrue\fi
  \ifx#1U\UnderFrametrue\fi
  \ifx#1w
    \ifnum\@msidraft=1\wasdrafttrue\else\wasdraftfalse\fi
    \@msidraft=\@ne
  \fi
  \let\next=\readFRAMEparams
  \fi
 \next
 }%
\def\IFRAME#1#2#3#4#5#6{%
      \bgroup
      \let\QCTOptA\empty
      \let\QCTOptB\empty
      \let\QCBOptA\empty
      \let\QCBOptB\empty
      #6%
      \parindent=0pt
      \leftskip=0pt
      \rightskip=0pt
      \setbox0=\hbox{\QCBOptA}%
      \@tempdima=#1\relax
      \ifOverFrame
          \typeout{This is not implemented yet}%
          \show\HELP
      \else
         \ifdim\wd0>\@tempdima
            \advance\@tempdima by \@tempdima
            \ifdim\wd0 >\@tempdima
               \setbox1 =\vbox{%
                  \unskip\hbox to \@tempdima{\hfill\GRAPHIC{#5}{#4}{#1}{#2}{#3}\hfill}%
                  \unskip\hbox to \@tempdima{\parbox[b]{\@tempdima}{\QCBOptA}}%
               }%
               \wd1=\@tempdima
            \else
               \textwidth=\wd0
               \setbox1 =\vbox{%
                 \noindent\hbox to \wd0{\hfill\GRAPHIC{#5}{#4}{#1}{#2}{#3}\hfill}\\%
                 \noindent\hbox{\QCBOptA}%
               }%
               \wd1=\wd0
            \fi
         \else
            \ifdim\wd0>0pt
              \hsize=\@tempdima
              \setbox1=\vbox{%
                \unskip\GRAPHIC{#5}{#4}{#1}{#2}{0pt}%
                \break
                \unskip\hbox to \@tempdima{\hfill \QCBOptA\hfill}%
              }%
              \wd1=\@tempdima
           \else
              \hsize=\@tempdima
              \setbox1=\vbox{%
                \unskip\GRAPHIC{#5}{#4}{#1}{#2}{0pt}%
              }%
              \wd1=\@tempdima
           \fi
         \fi
         \@tempdimb=\ht1
         \advance\@tempdimb by -#2
         \advance\@tempdimb by #3
         \leavevmode
         \raise -\@tempdimb \hbox{\box1}%
      \fi
      \egroup%
}%
\def\DFRAME#1#2#3#4#5{%
  \vspace\topsep
  \hfil\break
  \bgroup
     \leftskip\@flushglue
	 \rightskip\@flushglue
	 \parindent\z@
	 \parfillskip\z@skip
     \let\QCTOptA\empty
     \let\QCTOptB\empty
     \let\QCBOptA\empty
     \let\QCBOptB\empty
	 \vbox\bgroup
        \ifOverFrame 
           #5\QCTOptA\par
        \fi
        \GRAPHIC{#4}{#3}{#1}{#2}{\z@}%
        \ifUnderFrame 
           \break#5\QCBOptA
        \fi
	 \egroup
  \egroup
  \vspace\topsep
  \break
}%
\def\FFRAME#1#2#3#4#5#6#7{%
  \@ifundefined{floatstyle}
    {
     \begin{figure}[#1]%
    }
    {
	 \ifx#1h
      \begin{figure}[H]%
	 \else
      \begin{figure}[#1]%
	 \fi
	}
  \let\QCTOptA\empty
  \let\QCTOptB\empty
  \let\QCBOptA\empty
  \let\QCBOptB\empty
  \ifOverFrame
    #4
    \ifx\QCTOptA\empty
    \else
      \ifx\QCTOptB\empty
        \caption{\QCTOptA}%
      \else
        \caption[\QCTOptB]{\QCTOptA}%
      \fi
    \fi
    \ifUnderFrame\else
      \label{#5}%
    \fi
  \else
    \UnderFrametrue%
  \fi
  \begin{center}\GRAPHIC{#7}{#6}{#2}{#3}{\z@}\end{center}%
  \ifUnderFrame
    #4
    \ifx\QCBOptA\empty
      \caption{}%
    \else
      \ifx\QCBOptB\empty
        \caption{\QCBOptA}%
      \else
        \caption[\QCBOptB]{\QCBOptA}%
      \fi
    \fi
    \label{#5}%
  \fi
  \end{figure}%
 }%
\def\makeactives{
  \catcode`\"=\active
  \catcode`\;=\active
  \catcode`\:=\active
  \catcode`\'=\active
  \catcode`\~=\active
}
   \gdef\activesoff{%
      \def"{\string"}%
      \def;{\string;}%
      \def:{\string:}%
      \def'{\string'}%
      \def~{\string~}%
    }
\def\FRAME#1#2#3#4#5#6#7#8{%
 \bgroup
 \ifnum\@msidraft=\@ne
   \wasdrafttrue
 \else
   \wasdraftfalse%
 \fi
 \def\LaTeXparams{}%
 \dispkind=\z@
 \def\LaTeXparams{}%
 \doFRAMEparams{#1}%
 \ifnum\dispkind=\z@\IFRAME{#2}{#3}{#4}{#7}{#8}{#5}\else
  \ifnum\dispkind=\@ne\DFRAME{#2}{#3}{#7}{#8}{#5}\else
   \ifnum\dispkind=\tw@
    \edef\@tempa{\noexpand\FFRAME{\LaTeXparams}}%
    \@tempa{#2}{#3}{#5}{#6}{#7}{#8}%
    \fi
   \fi
  \fi
  \ifwasdraft\@msidraft=1\else\@msidraft=0\fi{}%
  \egroup
 }%
\def\TEXUX#1{"texux"}
\long\def\QQQ#1#2{%
     \long\expandafter\def\csname#1\endcsname{#2}}%
\long\def\QQA#1#2{}%
\def\QTR#1#2{{\csname#1\endcsname {#2}}}%
\def\EXPAND#1[#2]#3{}%
\def\NOEXPAND#1[#2]#3{}%
\def\LaTeXparent#1{}%
\def\ChildStyles#1{}%
\def\ChildDefaults#1{}%
\def\QTagDef#1#2#3{}%
  \providecommand{\UNICODE}[2][]{\protect\rule{.1in}{.1in}}
  \providecommand{\U}[1]{\protect\rule{.1in}{.1in}}
\def\QQfnmark#1{\footnotemark}
 \def\abstract{%
  \if@twocolumn
   \section*{Abstract (Not appropriate in this style!)}%
   \else \small 
   \begin{center}{\bf Abstract\vspace{-.5em}\vspace{\z@}}\end{center}%
   \quotation 
   \fi
  }%
   \def\registered{\relax\ifmmode{}\r@gistered
                    \else$\m@th\r@gistered$\fi}%
 \def\r@gistered{^{\ooalign
  {\hfil\raise.07ex\hbox{$\scriptstyle\rm\text{R}$}\hfil\crcr
  \mathhexbox20D}}}}{}%
\newdimen\theight
\def\newfmtname{LaTeX2e}
  \DeclareOldFontCommand{\rm}{\normalfont\rmfamily}{\mathrm}
  \DeclareOldFontCommand{\sf}{\normalfont\sffamily}{\mathsf}
  \DeclareOldFontCommand{\tt}{\normalfont\ttfamily}{\mathtt}
  \DeclareOldFontCommand{\bf}{\normalfont\bfseries}{\mathbf}
  \DeclareOldFontCommand{\it}{\normalfont\itshape}{\mathit}
  \DeclareOldFontCommand{\sl}{\normalfont\slshape}{\@nomath\sl}
  \DeclareOldFontCommand{\sc}{\normalfont\scshape}{\@nomath\sc}
\def\alpha{{\Greekmath 010B}}%
\def\beta{{\Greekmath 010C}}%
\def\gamma{{\Greekmath 010D}}%
\def\delta{{\Greekmath 010E}}%
\def\epsilon{{\Greekmath 010F}}%
\def\zeta{{\Greekmath 0110}}%
\def\eta{{\Greekmath 0111}}%
\def\theta{{\Greekmath 0112}}%
\def\iota{{\Greekmath 0113}}%
\def\kappa{{\Greekmath 0114}}%
\def\lambda{{\Greekmath 0115}}%
\def\mu{{\Greekmath 0116}}%
\def\nu{{\Greekmath 0117}}%
\def\xi{{\Greekmath 0118}}%
\def\pi{{\Greekmath 0119}}%
\def\rho{{\Greekmath 011A}}%
\def\sigma{{\Greekmath 011B}}%
\def\tau{{\Greekmath 011C}}%
\def\upsilon{{\Greekmath 011D}}%
\def\phi{{\Greekmath 011E}}%
\def\chi{{\Greekmath 011F}}%
\def\psi{{\Greekmath 0120}}%
\def\omega{{\Greekmath 0121}}%
\def\varepsilon{{\Greekmath 0122}}%
\def\vartheta{{\Greekmath 0123}}%
\def\varpi{{\Greekmath 0124}}%
\def\varrho{{\Greekmath 0125}}%
\def\varsigma{{\Greekmath 0126}}%
\def\varphi{{\Greekmath 0127}}%
\def\nabla{{\Greekmath 0272}}
\def\FindBoldGroup{%
   {\setbox0=\hbox{$\mathbf{x\global\edef\theboldgroup{\the\mathgroup}}$}}%
}
\def\Greekmath#1#2#3#4{%
    \if@compatibility
        \ifnum\mathgroup=\symbold
           \mathchoice{\mbox{\boldmath$\displaystyle\mathchar"#1#2#3#4$}}%
                      {\mbox{\boldmath$\textstyle\mathchar"#1#2#3#4$}}%
                      {\mbox{\boldmath$\scriptstyle\mathchar"#1#2#3#4$}}%
                      {\mbox{\boldmath$\scriptscriptstyle\mathchar"#1#2#3#4$}}%
        \else
           \mathchar"#1#2#3#4%
        \fi 
    \else 
        \FindBoldGroup
        \ifnum\mathgroup=\theboldgroup 
           \mathchoice{\mbox{\boldmath$\displaystyle\mathchar"#1#2#3#4$}}%
                      {\mbox{\boldmath$\textstyle\mathchar"#1#2#3#4$}}%
                      {\mbox{\boldmath$\scriptstyle\mathchar"#1#2#3#4$}}%
                      {\mbox{\boldmath$\scriptscriptstyle\mathchar"#1#2#3#4$}}%
        \else
           \mathchar"#1#2#3#4%
        \fi     	    
	  \fi}
\newif\ifGreekBold  \GreekBoldfalse
\let\SAVEPBF=\pbf
\def\pbf{\GreekBoldtrue\SAVEPBF}%
  \newcounter{equationnumber}  
  \def\mathletters{%
     \addtocounter{equation}{1}
     \edef\@currentlabel{\theequation}%
     \setcounter{equationnumber}{\c@equation}
     \setcounter{equation}{0}%
     \edef\theequation{\@currentlabel\noexpand\alph{equation}}%
  }
    \def\BibTeX{{\rm B\kern-.05em{\sc i\kern-.025em b}\kern-.08em
                 T\kern-.1667em\lower.7ex\hbox{E}\kern-.125emX}}}{}%
\def\AmS{{\protect\usefont{OMS}{cmsy}{m}{n}%
                A\kern-.1667em\lower.5ex\hbox{M}\kern-.125emS}}}{}%
\def\@@eqncr{\let\@tempa\relax
    \ifcase\@eqcnt \def\@tempa{& & &}\or \def\@tempa{& &}%
      \else \def\@tempa{&}\fi
     \@tempa
     \if@eqnsw
        \iftag@
           \@taggnum
        \else
           \@eqnnum\stepcounter{equation}%
        \fi
     \fi
     \global\tag@false
     \global\@eqnswtrue
     \global\@eqcnt\z@\cr}
\def\TCItag{\@ifnextchar*{\@TCItagstar}{\@TCItag}}
\def\@TCItag#1{%
    \global\tag@true
    \global\def\@taggnum{(#1)}%
    \global\def\@currentlabel{#1}}
\def\@TCItagstar*#1{%
    \global\tag@true
    \global\def\@taggnum{#1}%
    \global\def\@currentlabel{#1}}
\def\tint{\msi@int\textstyle\int}%
\def\tiint{\msi@int\textstyle\iint}%
\def\tiiint{\msi@int\textstyle\iiint}%
\def\tiiiint{\msi@int\textstyle\iiiint}%
\def\tidotsint{\msi@int\textstyle\idotsint}%
\def\toint{\msi@int\textstyle\oint}%
\newtoks\temptoksa
\newtoks\temptoksb
\newtoks\temptoksc
\def\msi@int#1#2{%
 \def\@temp{{#1#2\the\temptoksc_{\the\temptoksa}^{\the\temptoksb}}}%
 \futurelet\@nextcs
 \@int
}
\def\@int{%
   \ifx\@nextcs\limits
      \typeout{Found limits}%
      \temptoksc={\limits}%
	  \let\@next\@intgobble%
   \else\ifx\@nextcs\nolimits
      \typeout{Found nolimits}%
      \temptoksc={\nolimits}%
	  \let\@next\@intgobble%
   \else
      \typeout{Did not find limits or no limits}%
      \temptoksc={}%
      \let\@next\msi@limits%
   \fi\fi
   \@next   
}%
\def\@intgobble#1{%
   \typeout{arg is #1}%
   \msi@limits
}
\def\msi@limits{%
   \temptoksa={}%
   \temptoksb={}%
   \@ifnextchar_{\@limitsa}{\@limitsb}%
}
\def\@limitsa_#1{%
   \temptoksa={#1}%
   \@ifnextchar^{\@limitsc}{\@temp}%
}
\def\@limitsb{%
   \@ifnextchar^{\@limitsc}{\@temp}%
}
\def\@limitsc^#1{%
   \temptoksb={#1}%
   \@ifnextchar_{\@limitsd}{\@temp}%
}
\def\@limitsd_#1{%
   \temptoksa={#1}%
   \@temp
}
\def\dint{\msi@int\displaystyle\int}%
\def\diint{\msi@int\displaystyle\iint}%
\def\diiint{\msi@int\displaystyle\iiint}%
\def\diiiint{\msi@int\displaystyle\iiiint}%
\def\didotsint{\msi@int\displaystyle\idotsint}%
\def\doint{\msi@int\displaystyle\oint}%
\def\ExitTCILatex{\makeatother }
\if@compatibility\message{amsmath already loaded}\fi\aftergroup\ExitTCILatex}
\if@compatibility\message{amstex already loaded}\fi\aftergroup\ExitTCILatex}
\if@compatibility\message{amsgen already loaded}\fi\aftergroup\ExitTCILatex}
\let\DOTSI\relax
\def\RIfM@{\relax\ifmmode}%
\def\FN@{\futurelet\next}%
\def\iint{\DOTSI\intno@\tw@\FN@\ints@}%
\def\iiint{\DOTSI\intno@\thr@@\FN@\ints@}%
\def\iiiint{\DOTSI\intno@4 \FN@\ints@}%
\def\idotsint{\DOTSI\intno@\z@\FN@\ints@}%
\def\ints@{\findlimits@\ints@@}%
\newif\iflimtoken@
\newif\iflimits@
\def\findlimits@{\limtoken@true\ifx\next\limits\limits@true
 \else\ifx\next\nolimits\limits@false\else
 \limtoken@false\ifx\ilimits@\nolimits\limits@false\else
 \ifinner\limits@false\else\limits@true\fi\fi\fi\fi}%
\def\multint@{\int\ifnum\intno@=\z@\intdots@                          
 \else\intkern@\fi                                                    
 \ifnum\intno@>\tw@\int\intkern@\fi                                   
 \ifnum\intno@>\thr@@\int\intkern@\fi                                 
 \int}
\def\multintlimits@{\intop\ifnum\intno@=\z@\intdots@\else\intkern@\fi
 \ifnum\intno@>\tw@\intop\intkern@\fi
 \ifnum\intno@>\thr@@\intop\intkern@\fi\intop}%
\def\intic@{%
    \mathchoice{\hskip.5em}{\hskip.4em}{\hskip.4em}{\hskip.4em}}%
\def\negintic@{\mathchoice
 {\hskip-.5em}{\hskip-.4em}{\hskip-.4em}{\hskip-.4em}}%
\def\ints@@{\iflimtoken@                                              
 \def\ints@@@{\iflimits@\negintic@
   \mathop{\intic@\multintlimits@}\limits                             
  \else\multint@\nolimits\fi                                          
  \eat@}
 \else                                                                
 \def\ints@@@{\iflimits@\negintic@
  \mathop{\intic@\multintlimits@}\limits\else
  \multint@\nolimits\fi}\fi\ints@@@}%
\def\intkern@{\mathchoice{\!\!\!}{\!\!}{\!\!}{\!\!}}%
\def\plaincdots@{\mathinner{\cdotp\cdotp\cdotp}}%
\def\intdots@{\mathchoice{\plaincdots@}%
 {{\cdotp}\mkern1.5mu{\cdotp}\mkern1.5mu{\cdotp}}%
 {{\cdotp}\mkern1mu{\cdotp}\mkern1mu{\cdotp}}%
 {{\cdotp}\mkern1mu{\cdotp}\mkern1mu{\cdotp}}}%
\def\RIfM@{\relax\protect\ifmmode}
\def\text{\RIfM@\expandafter\text@\else\expandafter\mbox\fi}
\let\nfss@text\text
\def\text@#1{\mathchoice
   {\textdef@\displaystyle\f@size{#1}}%
   {\textdef@\textstyle\tf@size{\firstchoice@false #1}}%
   {\textdef@\textstyle\sf@size{\firstchoice@false #1}}%
   {\textdef@\textstyle \ssf@size{\firstchoice@false #1}}%
   \glb@settings}
\def\textdef@#1#2#3{\hbox{{%
                    \everymath{#1}%
                    \let\f@size#2\selectfont
                    #3}}}
\newif\iffirstchoice@
\def\Let@{\relax\iffalse{\fi\let\\=\cr\iffalse}\fi}%
\def\vspace@{\def\vspace##1{\crcr\noalign{\vskip##1\relax}}}%
\def\multilimits@{\bgroup\vspace@\Let@
 \baselineskip\fontdimen10 \scriptfont\tw@
 \advance\baselineskip\fontdimen12 \scriptfont\tw@
 \lineskip\thr@@\fontdimen8 \scriptfont\thr@@
 \lineskiplimit\lineskip
 \vbox\bgroup\ialign\bgroup\hfil$\m@th\scriptstyle{##}$\hfil\crcr}%
\def\Sb{_\multilimits@}%
\def\endSb{\crcr\egroup\egroup\egroup}%
\def\Sp{^\multilimits@}%
\newdimen\ex@
\def\rightarrowfill@#1{$#1\m@th\mathord-\mkern-6mu\cleaders
 \hbox{$#1\mkern-2mu\mathord-\mkern-2mu$}\hfill
 \mkern-6mu\mathord\rightarrow$}%
\def\leftarrowfill@#1{$#1\m@th\mathord\leftarrow\mkern-6mu\cleaders
 \hbox{$#1\mkern-2mu\mathord-\mkern-2mu$}\hfill\mkern-6mu\mathord-$}%
\def\leftrightarrowfill@#1{$#1\m@th\mathord\leftarrow
\mkern-6mu\cleaders
 \hbox{$#1\mkern-2mu\mathord-\mkern-2mu$}\hfill
 \mkern-6mu\mathord\rightarrow$}%
\def\overrightarrow{\mathpalette\overrightarrow@}%
\def\overrightarrow@#1#2{\vbox{\ialign{##\crcr\rightarrowfill@#1\crcr
 \noalign{\kern-\ex@\nointerlineskip}$\m@th\hfil#1#2\hfil$\crcr}}}%
\def\overleftarrow{\mathpalette\overleftarrow@}%
\def\overleftarrow@#1#2{\vbox{\ialign{##\crcr\leftarrowfill@#1\crcr
 \noalign{\kern-\ex@\nointerlineskip}$\m@th\hfil#1#2\hfil$\crcr}}}%
\def\overleftrightarrow{\mathpalette\overleftrightarrow@}%
\def\overleftrightarrow@#1#2{\vbox{\ialign{##\crcr
   \leftrightarrowfill@#1\crcr
 \noalign{\kern-\ex@\nointerlineskip}$\m@th\hfil#1#2\hfil$\crcr}}}%
\def\underrightarrow{\mathpalette\underrightarrow@}%
\def\underrightarrow@#1#2{\vtop{\ialign{##\crcr$\m@th\hfil#1#2\hfil
  $\crcr\noalign{\nointerlineskip}\rightarrowfill@#1\crcr}}}%
\def\underleftarrow{\mathpalette\underleftarrow@}%
\def\underleftarrow@#1#2{\vtop{\ialign{##\crcr$\m@th\hfil#1#2\hfil
  $\crcr\noalign{\nointerlineskip}\leftarrowfill@#1\crcr}}}%
\def\underleftrightarrow{\mathpalette\underleftrightarrow@}%
\def\underleftrightarrow@#1#2{\vtop{\ialign{##\crcr$\m@th
  \hfil#1#2\hfil$\crcr
 \noalign{\nointerlineskip}\leftrightarrowfill@#1\crcr}}}%
\def\qopnamewl@#1{\mathop{\operator@font#1}\nlimits@}
\let\nlimits@\displaylimits
\def\setboxz@h{\setbox\z@\hbox}
\def\varlim@#1#2{\mathop{\vtop{\ialign{##\crcr
 \hfil$#1\m@th\operator@font lim$\hfil\crcr
 \noalign{\nointerlineskip}#2#1\crcr
 \noalign{\nointerlineskip\kern-\ex@}\crcr}}}}
 \def\rightarrowfill@#1{\m@th\setboxz@h{$#1-$}\ht\z@\z@
  $#1\copy\z@\mkern-6mu\cleaders
  \hbox{$#1\mkern-2mu\box\z@\mkern-2mu$}\hfill
  \mkern-6mu\mathord\rightarrow$}
\def\leftarrowfill@#1{\m@th\setboxz@h{$#1-$}\ht\z@\z@
  $#1\mathord\leftarrow\mkern-6mu\cleaders
  \hbox{$#1\mkern-2mu\copy\z@\mkern-2mu$}\hfill
  \mkern-6mu\box\z@$}
\def\projlim{\qopnamewl@{proj\,lim}}
\def\injlim{\qopnamewl@{inj\,lim}}
\def\varinjlim{\mathpalette\varlim@\rightarrowfill@}
\def\varprojlim{\mathpalette\varlim@\leftarrowfill@}
\def\varliminf{\mathpalette\varliminf@{}}
\def\varliminf@#1{\mathop{\underline{\vrule\@depth.2\ex@\@width\z@
   \hbox{$#1\m@th\operator@font lim$}}}}
\def\varlimsup{\mathpalette\varlimsup@{}}
\def\varlimsup@#1{\mathop{\overline
  {\hbox{$#1\m@th\operator@font lim$}}}}
\def\align{\@verbatim \frenchspacing\@vobeyspaces \@alignverbatim
You are using the "align" environment in a style in which it is not defined.}
\let\csname endalign*\endcsname =\endtrivlist
\def\alignat{\@verbatim \frenchspacing\@vobeyspaces \@alignatverbatim
You are using the "alignat" environment in a style in which it is not defined.}
\let\csname endalignat*\endcsname =\endtrivlist
\def\xalignat{\@verbatim \frenchspacing\@vobeyspaces \@xalignatverbatim
You are using the "xalignat" environment in a style in which it is not defined.}
\let\csname endxalignat*\endcsname =\endtrivlist
\def\gather{\@verbatim \frenchspacing\@vobeyspaces \@gatherverbatim
You are using the "gather" environment in a style in which it is not defined.}
\let\csname endgather*\endcsname =\endtrivlist
\def\multiline{\@verbatim \frenchspacing\@vobeyspaces \@multilineverbatim
You are using the "multiline" environment in a style in which it is not defined.}
\let\csname endmultiline*\endcsname =\endtrivlist
\def\arrax{\@verbatim \frenchspacing\@vobeyspaces \@arraxverbatim
You are using a type of "array" construct that is only allowed in AmS-LaTeX.}
\def\tabulax{\@verbatim \frenchspacing\@vobeyspaces \@tabulaxverbatim
You are using a type of "tabular" construct that is only allowed in AmS-LaTeX.}
\let\csname endarrax*\endcsname =\endtrivlist
\let\csname endtabulax*\endcsname =\endtrivlist
 \def\endequation{%
     \ifmmode\ifinner 
      \iftag@
        \addtocounter{equation}{-1} 
        $\hfil
           \displaywidth\linewidth\@taggnum\egroup \endtrivlist
        \global\tag@false
        \global\@ignoretrue   
      \else
        $\hfil
           \displaywidth\linewidth\@eqnnum\egroup \endtrivlist
        \global\tag@false
        \global\@ignoretrue 
      \fi
     \else   
      \iftag@
        \addtocounter{equation}{-1} 
        \eqno \hbox{\@taggnum}
        \global\tag@false%
        $$\global\@ignoretrue
      \else
        \eqno \hbox{\@eqnnum}
        $$\global\@ignoretrue
      \fi
     \fi\fi
 } 
 \newif\iftag@ \tag@false
 \def\TCItag{\@ifnextchar*{\@TCItagstar}{\@TCItag}}
 \def\@TCItag#1{%
     \global\tag@true
     \global\def\@taggnum{(#1)}%
     \global\def\@currentlabel{#1}}
 \def\@TCItagstar*#1{%
     \global\tag@true
     \global\def\@taggnum{#1}%
     \global\def\@currentlabel{#1}}
     \def\tag{\@ifnextchar*{\@tagstar}{\@tag}}
     \def\@tag#1{%
         \global\tag@true
         \global\def\@taggnum{(#1)}}
     \def\@tagstar*#1{%
         \global\tag@true
         \global\def\@taggnum{#1}}
\title{\LARGE \bf
Distributed Robust Seeking of Nash Equilibrium for Networked Games: An Extended State Observer based Approach}
\author{Maojiao Ye
\thanks{M. Ye is with the School of Automation, Nanjing University of Science and Technology, Nanjing 210094, P.R. China (Email: ye0003ao@e.ntu.edu.sg).}
\thanks{This work is supported by the National Natural Science Foundation of China (NSFC), No. 61803202, and the Natural Science Foundation of
Jiangsu Province, No. BK20180455.}
\thanks{Part of the manuscript was presented at IEEE Conference on Decision and Control, 2019 \cite{YECDC19}.}
}
\begin{document}

\maketitle
\thispagestyle{empty}
\pagestyle{empty}

\begin{abstract}
This paper aims to accommodate networked games in which the players' dynamics are subject to unmodeled and disturbance terms. The unmodeled and disturbance terms are regarded as extended states for which observers are designed to estimate them. Compensating the players' dynamics with the observed values, the control laws are designed to achieve the robust seeking of the Nash equilibrium for networked games. Firstly, we consider the case in which the players' dynamics are subject to time-varying disturbances only. In this case, the seeking strategy is developed by employing a smooth observer based on the Proportional-Integral (PI) control. By utilizing the designed strategy, we show that the players' actions would converge to a small neighborhood of the Nash equilibrium. Moreover, the ultimate bound can be adjusted to be arbitrarily small by tuning the control gains. Then, we further consider the case in which both an unmodeled term and a disturbance term coexist in the players' dynamics. In this case, we adapt the idea from the Robust Integral of the Sign of the Error (RISE) method in the strategy design to achieve the asymptotic seeking of the Nash equilibrium. Both strategies are analytically investigated via the Lyapunov stability analysis. The applications of the proposed methods for a network of velocity-actuated vehicles are discussed. Finally, the effectiveness of the proposed methods is verified via conducting numerical simulations.
\end{abstract}

\begin{keywords}
Nash equilibrium seeking, unmodeled dynamics and disturbance, extended state, distributed networks.
\end{keywords}

\section{Introduction}
More and more researchers have begun to explore Nash equilibrium computation for networked games, especially in recent years (see e.g., \cite{YeTAC18}-\cite{YeTcyber18} for more comprehensive reviews). The main motivations for their efforts lie in both the wide applications of noncooperative games and their enormous potential to be adapted for the coordinative control and optimization of networked systems. The main focuses of the existing works are on algorithm developments, communication issues and tackling the constraints of the games in which the players are of first-order integrator-type dynamics. For example, Nash equilibrium seeking strategies were proposed for general networked games, $N$-cluster noncooperative games and (semi-)aggregative games in \cite{YeTAC18}\cite{SalehisadaghianiAT16}, \cite{YeAT18}-\cite{YEAT2020} and \cite{YECyber2017}-\cite{Belgioioso17}, respectively.
The authors in \cite{ZhuAUTO16} considered Nash equilibrium computation for generalized convex games. In particular, the robustness of the Nash equilibrium computing method towards the changes of the network topologies as well as the transmission delays was investigated. In \cite{YeTcyber18}, the robustness of the consensus-based Nash computation algorithm towards switching communication topologies among the players was explored. In addition, robust analysis for the loss of communication was conducted. Given that the lengths of the time slots, in which the communication among the players was lost, were bounded by some certain value, the method can still achieve Nash seeking under certain conditions \cite{YeTcyber18}. \textbf{Nevertheless, it is worth mentioning that in many existing works including \cite{YeTAC18}-\cite{YeTcyber18}, the players were considered to be static (or first-order integrator-type) players whose action updating strategies can be freely designed though the system dynamics exhibit much more complex characteristics in many engineering systems.}

In practical situations, the dynamics of the plants are usually subject to unmodeled terms and disturbances. For example, due to external disturbance and imprecise microfabrication, the control strategy designed for the MEMS gyroscope should take the parameter variations, the mechanical-thermal noises and the axes mechanical couplings into account \cite{ZhengTCST09}. To compensate for the effects of imprecise microfabrication and external disturbance, an Active Disturbance Rejection Control (ADRC) based strategy was adapted for the control of MEMS gyroscope in \cite{ZhengTCST09}. Moreover, physical hydraulic systems were trapped by uncertain parameters (e.g., hydraulic and payload unknown parameters) and external disturbances \cite{YaoTASCE17}. Gear shifting and tank pressure resulted in disturbance and varying parameters in the regenerative braking torque control of the air hybrid vehicles \cite{Fazeli12}. Moreover, it is a challenging task to get the accurate model information for the pneumatic muscle actuators. To address the unmodeled complexities, an ADRC-based control strategy was proposed for the pneumatic muscle actuator systems in \cite{YuanTIE19}. Hydrodynamic damping forces and the working environments, that the autonomous marine surface vessels suffer, gave rise to unmodeled dynamics and unpredictable disturbances for the tracking control of marine surface vessels \cite{HeTIE19}. \textbf{Motivated by the fact that unmodeled dynamics and disturbances are practically ubiquitous, this paper considers the robust Nash equilibrium computation for games in which there are unmodeled dynamics and time-varying disturbances in the players' dynamics.}

To deal with uncertainties and disturbances in system dynamics, various methods (e.g, disturbance-observer-based control, equivalent input disturbance estimator and extended state observers, to mention just a few) have been proposed \cite{Chen16}. For instance, in \cite{Chen04}, disturbance-observer-based control techniques were utilized for nonlinear systems subject to
disturbances that are produced by an exogenous system. In \cite{SheTIE08}, an equivalent input disturbance estimator was proposed for a servo system. A generalized extended state observer based approach was investigated for systems that are suffering from mismatched uncertainties in \cite{Li12}. The core idea of these disturbance attenuation methods is to estimate the uncertainties and disturbances and then compensate the system dynamics with the evaluated value \cite{Chen16}. Noticing that the extended state observer based approaches require least plant information among the aforementioned disturbance estimation and attenuation methods \cite{Li12}, this paper aims to design distributed robust Nash equilibrium seeking strategies for general networked games investigated in \cite{YeTAC18}\cite{YeTcyber18} via designing extended state observer based approaches.  Compared with the distributed seeking algorithms in \cite{YeTAC18}\cite{YeTcyber18}, this paper accommodates the unmodeled complexities and time-varying disturbances in the system dynamics. The unmodeled dynamics bring couplings among all the players' dynamics, thus making the problem much more challenging.
 In summary, with part of the manuscript presented in \cite{YECDC19}, the main contributions of the paper are given in threefold:
\begin{enumerate}
  \item The robust Nash equilibrium seeking problem is considered for games under distributed communication networks. In the considered problem, there exist unmodeled complexities and time-varying disturbances in the players' dynamics.  In the proposed methods, the uncertainties and disturbances in the system dynamics are regarded as extended states, for which a PI-based observer and a RISE-based observer are adapted. Based on the designed observers, distributed robust Nash equilibrium seeking strategies are established.
  \item The convergence results are theoretically established via conducting Lyapunov stability analysis. It is shown that driven by the PI-based algorithm, the players' actions would converge to a neighborhood of the Nash equilibrium. Moreover, the ultimate bound can be adjusted to be arbitrarily small by tuning the control gains. For the RISE-based method, it is theoretically proven that the players' actions can be steered to the Nash equilibrium asymptotically.
  \item The applications of the proposed methods for a network of velocity-actuated vehicles are discussed. In particular, the convergence results for the connectivity control game among networked mobile sensors are established.
\end{enumerate}

We proceed the remainder of the paper in the following order. The preliminaries and notations are given in Section \ref{pre}. The problem is formulated in Section \ref{p_f}. The main results are given in Section \ref{p1_res}, where a PI-based method and a RISE-based method are developed. The applications of the proposed methods in sensor networks are discussed in Section \ref{v_5}. A numerical example is given in Section \ref{p_1_numer} for the verification of the proposed methods and the conclusions are drawn in Section \ref{conc}.

\section{Preliminaries and notations}\label{pre}
In this paper, we use $\mathcal{G}=(\mathcal{V},\mathcal{E}_d)$ to denote a communication graph.  Note that $\mathcal{V}=\{1,2,\cdots,M\}$, where $M\geq 2$ and is an integer, defines the set of nodes in the graph and $\mathcal{E}_d$ defines the set of edges for the nodes. The elements of $\mathcal{E}_d$ are $(i,j)$, which represents for an edge from node $i$ to node $j.$ Associate with $(i,j)\in \mathcal{E}_d$ a weight $a_{ji}>0$ and $a_{ii}=0$. If $a_{ij}=a_{ji},\forall i,j\in \mathcal{V},$ we say that $\mathcal{G}$ is undirected. A matrix with its element on the $i$th row and $j$th column being $a_{ij}$ ($a_{ij}>0$ if $(j,i)\in \mathcal{E}_d,$ else, $a_{ij}=0$) is the adjacency matrix of graph $\mathcal{G}$ and is termed as $\mathcal{A}$. Agent $j$ is a neighbor of agent $i$ if $a_{ij}>0$. Given that there is a path between any pair of distinct vertices, the undirected graph is connected. Define $\mathcal{D}$ as a diagonal matrix whose $i$th diagonal element is $\sum_{j=1}^{M}a_{ij}$. Then, the Laplacian matrix $\mathcal{L}$ is equal to $\mathcal{D}-\mathcal{A}$ \cite{YeTAC18}\cite{YETCST16}.

Notations: The concatenated column vector form of $h_i$ for $i\in\{1,2,\cdots,N\}$ is denoted as $[h_i]_{vec}$. Similarly, $[h_{ij}]_{vec}$ ($\text{diag}\{h_{ij}\}$) for $i\in\{1,2,\cdots,N\},j\in\{1,2,\cdots,M\}$ is defined as a column vector (diagonal matrix) whose elements (diagonal elements) are $h_{11},h_{12},\cdots,h_{1M},h_{21},\cdots,h_{NM},$ successively. The minimum and maximum eigenvalues of a symmetric positive definite matrix $P$ are denoted as $\lambda_{min}(P)$ and $\lambda_{max}(P),$ respectively. Moreover, the minimum and maximum elements of $h_i$ for $i\in\{1,2,\cdots,N\}$ are denoted as $\min\{h_i\}$ and $\max\{h_i\},$ respectively. The notation $\min\{a,b\}=a$ $(\max\{a,b\}=b)$ if $a\leq b.$ Else, $\min\{a,b\}=b$ $(\max\{a,b\}=a).$

\section{Problem Formulation}\label{p_f}
In this paper, we consider a game with $N$ players labeled from $1$ to $N$, successively. In the considered game, player $i$ intends to
\begin{equation}\label{from}
\begin{aligned}
&\text{min}_{x_i} \ \ f_i(\mathbf{x})\\
&\text{subject to} \ \ \dot{x}_i=u_i+\varsigma g_i(\mathbf{x})+d_i(t),
\end{aligned}
\end{equation}
where $i\in\mathcal{N}$ and $\mathcal{N}=\{1,2,\cdots,N\}$ is the player set. Moreover, $x_i\in \mathbb{R}$, $u_i$ and $f_i(\mathbf{x})$ are the action, the control input and the objective function of player $i$, respectively. In addition, $\mathbf{x}=[x_1,x_2,\cdots,x_N]^T$. Note that $g_i(\mathbf{x})$ and $d_i(t)$ respectively denote the unmodeled term and the external disturbance whose explicit expressions are not available. The parameter $\varsigma$ is equal to $0$ or $1$. In the rest of the paper, these two cases will be investigated successively. Suppose that if player $j$ is not a neighbor of player $i,$ then player $i$ can not directly access the action of player $j$.  The paper aims to design the control inputs to achieve robust Nash equilibrium seeking for the considered game.

For notational convenience, define $\mathbf{x}_{-i}=[x_1,x_2,\cdots,x_{i-1},x_{i+1},\cdots,x_N]^T$. Then, $f_i(\mathbf{x})$ can be written as $f_i(x_i,\mathbf{x}_{-i})$ instead. Following the above notations, Nash equilibrium is defined as follows.
\begin{definition}\label{def_1}
An action profile $\mathbf{x}^*=(x_i^*,\mathbf{x}_{-i}^*)$ is a Nash equilibrium if for $i\in\mathcal{N},$
\begin{equation}
f_i(x_i^*,\mathbf{x}_{-i}^*)\leq f_i(x_i,\mathbf{x}_{-i}^*),
\end{equation}
for $x_i\in\mathbb{R}$ \cite{YeTAC18}.
\end{definition}

\begin{Remark}
In this paper,
\begin{equation}\label{eq15}
\dot{x}_i=u_i+\varsigma g_i(\mathbf{x})+d_i(t),
\end{equation}
where $\varsigma=0$ or $\varsigma=1$. In contrast,
\begin{equation}
\dot{x}_i=u_i,
\end{equation}
in \cite{YeTAC18}\cite{YeTcyber18}, indicating that compared with \cite{YeTAC18}\cite{YeTcyber18}, unmodeled dynamics and uncertainties are further addressed in the paper. Note that as $g_i(\mathbf{x})$ is a function of $\mathbf{x},$ it brings couplings among the players' dynamics thus making the problem more challenging.
Moreover, the problem considered in this paper differs from that of \cite{YeAT18}-\cite{YEAT2020} in the following two aspects:
\begin{enumerate}
  \item  \textbf{On the game model}: The works in \cite{YeAT18}-\cite{YEAT2020} considered $N$-cluster games in which the engaged agents are separately contained in $N$ clusters. Each agent has a local objective function, which is a function of all the engaged agents' actions in the $N$ clusters. Moreover, the agents in the same cluster try to minimize the sum of their local objective functions. Different from \cite{YeAT18}-\cite{YEAT2020},  this paper considers that each player intends to minimize its own objective function, which is a function of all the players' actions, by adjusting its own action as described in \eqref{from}.
  \item \textbf{On the players' dynamics}: This paper considers that the players' dynamics are suffering from disturbances and unmodeled dynamics while  in \cite{YeAT18}-\cite{YEAT2020}, no dynamical constraints were considered and the Nash equilibrium seeking strategies can be freely designed.
\end{enumerate}

\end{Remark}

Adopted from \cite{YeTAC18}\cite{YeAT18}, the assumptions given below will be utilized in the upcoming sections.

\begin{Assumption}\label{ASS1}
For each $i\in\mathcal{N},$ $f_i(\mathbf{x})$ is a twice-continuously differentiable function.
\end{Assumption}
\begin{Assumption}\label{Assu2}
There exists a positive constant $m$ such that for $\mathbf{x},\mathbf{z}\in \mathbb{R}^N,$
\begin{equation}
(\mathbf{x}-\mathbf{z})^T(\Upsilon(\mathbf{x})-\Upsilon(\mathbf{z}))\geq m||\mathbf{x}-\mathbf{z}||^2,
\end{equation}
where $\Upsilon(\mathbf{x})=\left[\frac{\partial f_i(\mathbf{x})}{\partial x_i}\right]_{vec}.$
\end{Assumption}

\begin{Remark}
If $f_1(\mathbf{x})=f_2(\mathbf{x})=\cdots=f_N(\mathbf{x})=f(\mathbf{x}),$ Assumption \ref{Assu2} would be reduced to the strong convexity of $f(\mathbf{x}),$ which suggests the existence and uniqueness of the minimization solution of $f(\mathbf{x}).$ Similarly, Assumption \ref{Assu2} indicates that the pseudo-gradient vector $\Upsilon(\mathbf{x})$ is strongly monotone with constant $m,$ which suggests the existence and uniqueness of the Nash equilibrium in the game \cite{YeAT18}. Moreover, by Assumption \ref{Assu2}, the players' actions can be driven to the Nash equilibrium via the gradient play \cite{YeTAC18}, which lays the foundation for the stability analysis in this paper. Note that Assumption \ref{Assu2} is a widely adopted assumption in existing works on Nash equilibrium seeking (see e.g., \cite{YeTAC18}\cite{YeAT18} and the references therein).
\end{Remark}

\begin{Assumption}\label{ass_3}
The players are equipped with an undirected and connected communication graph $\mathcal{G}$.
\end{Assumption}

\section{Main Results}\label{p1_res}
In this section,  we firstly consider the case in which the players' dynamics are subject to external disturbance only, followed by the case in which the players' dynamics are subject to both an unmodeled term and a disturbance term.
\subsection{Strategy design with a smooth state observer}\label{part_1}
In this section, we concern with games in which player $i$'s action is given by
\begin{equation}\label{dyna_g}
\dot{x}_i=u_i+d_i(t),
\end{equation}
where $i\in\mathcal{N}.$ The disturbance $d_i(t)$ is supposed to satisfy the following condition.
\begin{Assumption}\label{ass_4}
The disturbance term $d_i(t)$ is continuously differentiable and $\dot{d}_i(t)$ is bounded for $t\geq t_0,i\in\mathcal{N},$ where $t_0$ is the initial time instant.
\end{Assumption}

To further proceed to the strategy design, let $z_i=d_i(t)$. Then, the dynamics of player $i$ can be written as
\begin{equation}
\begin{aligned}
\dot{x}_i=&u_i+z_i\\
\dot{z}_i=&\dot{d}_i(t).
\end{aligned}
\end{equation}

By regarding the external disturbance as an extended state, the control law is designed as
\begin{equation}\label{eq8}
u_i=-\frac{\partial f_i}{\partial x_i}(\mathbf{y}_i)-\hat{z}_i,
\end{equation}
in which $\frac{\partial f_i}{\partial x_i}(\mathbf{y}_i)=\frac{\partial f_i(\mathbf{x})}{\partial x_i}|_{\mathbf{x}=\mathbf{y}_i},$ $\mathbf{y}_i=[y_{i1},y_{i2},\cdots,y_{iN}]^T$ and $y_{ij},\hat{z}_i$ are generated by
\begin{equation}\label{eq9}
\begin{aligned}
\dot{y}_{ij}=&-\theta_{ij}(\sum_{k=1}^Na_{ik}(y_{ij}-y_{kj})+a_{ij}(y_{ij}-x_j))\\
\dot{\hat{x}}_i=&u_i+\hat{z}_i+\bar{k}_{i1}(x_i-\hat{x}_i)\\
\dot{\hat{z}}_i=&\bar{k}_{i2}(x_i-\hat{x}_i).
\end{aligned}
\end{equation}
Moreover, $\bar{k}_{i1}=\sigma k_{i1},\bar{k}_{i2}=\sigma^2 k_{i2},\theta_{ij}=\theta \bar{\theta}_{ij}$ where $\sigma, \theta$ are positive parameters to be determined and $\bar{\theta}_{ij},k_{i1},k_{i2}$ are fixed positive constants.

\begin{Remark}
As the players' dynamics are subject to time-varying disturbances, the main idea is to employ an observer to estimate the disturbance and then compensate for the disturbance by utilizing the estimated value in the control law. Hence, $\hat{z}_i$ can be regarded as the observed disturbance. Moreover, as the players do not have direct access into the actions of the players who are not their neighbors, this paper follows our previous works in \cite{YeTAC18}\cite{YeAT18} to estimate the requested information by utilizing consensus protocols.
\end{Remark}

Define
\begin{equation}
\begin{aligned}
\zeta_{i1}&=x_i-\hat{x}_i, \zeta_{i2}=z_i-\hat{z}_i\\
\xi_{i}&=x_i-x_i^*,\eta_{ij}=y_{ij}-x_j,
\end{aligned}
\end{equation}
where $x_i^*$ is defined in Definition \ref{def_1},
then,
\begin{equation}
\begin{aligned}
\dot{\zeta}_{i1}&=\dot{x}_i-\dot{\hat{x}}_i=-\bar{k}_{i1}\zeta_{i1}+\zeta_{i2}\\
\dot{\zeta}_{i2}&=\dot{z}_i-\dot{\hat{z}}_i=-\bar{k}_{i2}\zeta_{i1}+\dot{d}_i(t)\\
\dot{\xi}_{i}&=\dot{x}_i=-\frac{\partial f_i}{\partial x_i}(\mathbf{y}_i)+\zeta_{i2}\\
\dot{\eta}_{ij}&=\dot{y}_{ij}-\dot{x}_j\\
&=-\theta_{ij}(\sum_{k=1}^Na_{ik}(\eta_{ij}-\eta_{kj})+a_{ij}\eta_{ij})-\dot{\xi}_j.
\end{aligned}
\end{equation}

Hence,
\begin{equation}\label{clo_sys}
\begin{aligned}
\dot{\mathbf{\zeta}}_1&=-\sigma \mathbf{k}_1\mathbf{\zeta}_1+\mathbf{\zeta}_2\\
\dot{\mathbf{\zeta}}_2&=-\sigma^2 \mathbf{k}_2 \mathbf{\zeta}_1+\dot{\mathbf{d}}(t)\\
\dot{\mathbf{\xi}}&=-\left[\frac{\partial f_i}{\partial x_i}(\mathbf{y}_i)\right]_{vec}+\mathbf{\zeta}_2\\
\dot{\mathbf{\eta}}&=-\theta \bar{\Theta}(\mathcal{L}\otimes I_{N\times N}+\mathcal{A}_0)\mathbf{\eta}-\mathbf{1}_N\otimes \dot{\mathbf{\xi}},
\end{aligned}
\end{equation}
where $\mathbf{k}_1=\text{diag}\{k_{i1}\},\mathbf{k}_2=\text{diag}\{k_{i2}\},\mathbf{d}(t)=[d_i(t)]_{vec},\mathbf{\zeta}_1=[\zeta_{i1}]_{vec},\mathbf{\zeta}_2=[\zeta_{i2}]_{vec},\mathbf{\eta}=[\eta_{ij}]_{vec},\mathbf{\xi}=[\xi_{i}]_{vec},$ $\bar{\Theta}=\text{diag}\{\bar{\theta}_{ij}\}$, $\mathcal{A}_0=\text{diag}\{a_{ij}\}$ and $I_{N\times N}$ is an identity matrix of dimension $N\times N.$

Define $\mathbf{\zeta}=[\mathbf{\zeta}_1^T,\mathbf{\zeta}_2^T]^T$, then
\begin{equation}\label{eq6}
\dot{\mathbf{\zeta}}=-\Delta \mathbf{\zeta}+\mathbf{d}_1(t),
\end{equation}
where $\Delta=\left[
                \begin{array}{cc}
                  \sigma \mathbf{k}_1 & -I_{N\times N} \\
                  \sigma^2 \mathbf{k}_2 & \mathbf{0}_{N\times N} \\
                \end{array}
              \right]
$ and $\mathbf{d}_1(t)=[\mathbf{0}_N^T, \dot{\mathbf{d}}(t)^T]^T.$


It is obvious that the closed-loop system in \eqref{clo_sys} can be regarded as a cascaded system as $\dot{\mathbf{d}}(t)$ is bounded by  Assumption \ref{ass_4}. Hence, it is expectable that the proposed method can drive the players' actions to arbitrarily small neighborhood of the Nash equilibrium if the PI-based observer can achieve the disturbance estimation with arbitrarily small estimation error. In the following, we present the analytical convergence result for the closed-loop system in \eqref{clo_sys}.

\begin{Theorem}\label{th1}
Suppose that Assumptions \ref{ASS1}-\ref{ass_4} are satisfied. Then, for any positive constants $v_1,v_2$, there exists a positive constant $\theta^*(v_1,v_2),\sigma^*(v_1,v_2)$ such that for each $\theta\in(\theta^*,\infty)$, $\sigma\in(\sigma^*,\infty)$, there exists $\bar{T}\geq t_0$ such that for $t\geq \bar{T},$
\begin{equation}
||\mathcal{E}(t)||\leq v_2,
\end{equation}
given that $||\mathcal{E}(t_0)||\leq v_1,$ where $\mathcal{E}=[\mathbf{\zeta}^T,\mathbf{\eta}^T,\mathbf{\xi}^T]^T$.
\end{Theorem}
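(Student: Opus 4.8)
The plan is to read the closed-loop system \eqref{clo_sys} as a three-time-scale interconnection and to prove semiglobal practical stability by a composite Lyapunov argument. The gain $\sigma$ makes the observer-error block $\mathbf{\zeta}$ the fastest, the gain $\theta$ makes the consensus-error block $\mathbf{\eta}$ the next fastest, and the action-error block $\mathbf{\xi}$ is the slow (reduced) dynamics whose nominal behaviour is the gradient play $\dot{\mathbf{\xi}}\approx-\Upsilon(\mathbf{x})$. Because the partial derivatives are only locally Lipschitz, I would first confine the trajectory to a compact set fixed by $v_1$; this is exactly why the gains are permitted to depend on $(v_1,v_2)$ and why the conclusion is semiglobal rather than global.

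First I would treat the observer block \eqref{eq6}. The high-gain rescaling $\mathbf{\zeta}_1\mapsto\mathbf{\zeta}_1$, $\mathbf{\zeta}_2\mapsto\mathbf{\zeta}_2/\sigma$ turns $-\Delta$ into $\sigma A_0$ plus an $O(1/\sigma)$ forcing, where the block matrix $A_0$ is Hurwitz by the Routh--Hurwitz conditions on the positive constants $k_{i1},k_{i2}$. Solving $A_0^TP_\zeta+P_\zeta A_0=-I$ and using $V_\zeta=\mathbf{\zeta}^TP_\zeta\mathbf{\zeta}$ together with the bound on $\dot{\mathbf{d}}(t)$ from Assumption \ref{ass_4}, I would obtain an estimate of the form $\dot V_\zeta\leq-\sigma c_1 V_\zeta+c_2/\sigma$, which gives an ultimate bound on $\mathbf{\zeta}$ (hence on $\mathbf{\zeta}_2$, the term that feeds the action block) of order $1/\sigma$.

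Next I would treat the consensus block. Connectivity of $\mathcal{G}$ (Assumption \ref{ass_3}) makes $\bar{\Theta}(\mathcal{L}\otimes I_{N\times N}+\mathcal{A}_0)$ positive definite, so with $V_\eta=\mathbf{\eta}^TP_\eta\mathbf{\eta}$ the quadratic form decays at a rate proportional to $\theta$, while the driving term $-\mathbf{1}_N\otimes\dot{\mathbf{\xi}}$ is bounded on the compact set fixed above; this yields an ISS estimate for $\mathbf{\eta}$ whose ultimate bound shrinks as $\theta$ grows. For the action block I would use $V_\xi=\tfrac12||\mathbf{\xi}||^2$, split $[\frac{\partial f_i}{\partial x_i}(\mathbf{y}_i)]_{vec}=\Upsilon(\mathbf{x})+([\frac{\partial f_i}{\partial x_i}(\mathbf{y}_i)]_{vec}-\Upsilon(\mathbf{x}))$, and invoke $\Upsilon(\mathbf{x}^*)=0$ (the first-order Nash condition from Definition \ref{def_1}). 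Strong monotonicity (Assumption \ref{Assu2}) then supplies the term $-m||\mathbf{\xi}||^2$, while the remaining pieces are Lipschitz-bounded by $L||\mathbf{\xi}||\,||\mathbf{\eta}||+||\mathbf{\xi}||\,||\mathbf{\zeta}_2||$ on the compact set, so $\mathbf{\xi}$ is ISS with respect to $(\mathbf{\eta},\mathbf{\zeta}_2)$.

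Finally I would assemble the three estimates. The one genuine obstacle is that \eqref{clo_sys} is not a strict cascade: $\dot{\mathbf{\xi}}$ re-enters the $\mathbf{\eta}$ dynamics, so the $\mathbf{\eta}$--$\mathbf{\xi}$ pair is a feedback interconnection. I would close it either by a two-time-scale (singular perturbation) comparison, choosing $\theta^*$ large enough that the $\mathbf{\eta}$ boundary layer settles before it can appreciably excite $\mathbf{\xi}$, or equivalently by a small-gain argument on a weighted composite function $V=\alpha V_\zeta+\beta V_\eta+V_\xi$, selecting the weights and then $\sigma^*,\theta^*$ so that the indefinite cross terms are dominated by $-\sigma c_1 V_\zeta$, $-\theta c_3 V_\eta$ and $-mV_\xi$. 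Throughout, invariance of the compact set over $[t_0,\infty)$ must be checked so the local Lipschitz constant $L$ stays valid; this bookkeeping, rather than any single decay bound, is where the care lies, and it is precisely what forces the dependence $\theta^*(v_1,v_2),\sigma^*(v_1,v_2)$ and the existence of the settling time $\bar T$.
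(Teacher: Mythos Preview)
Your proposal is correct and follows essentially the same route as the paper: the high-gain rescaling $\mathbf{\zeta}_2\mapsto\mathbf{\zeta}_2/\sigma$ and Hurwitz Lyapunov argument for the observer block, the composite Lyapunov treatment of the coupled $(\mathbf{\eta},\mathbf{\xi})$ pair exploiting strong monotonicity, and the restriction to a compact set to handle the merely local Lipschitz constants are exactly what the paper does. The only structural difference is that the paper exploits the fact that the $\mathbf{\zeta}$-subsystem in \eqref{eq6} is a \emph{strict} cascade (it does not depend on $\mathbf{\eta},\mathbf{\xi}$ at all), so it first obtains the ultimate bound $\|\mathbf{\zeta}_2(t)\|\leq l_2/\sigma$ for $t>T$ via Theorem~4.19 of \cite{Khalil02} and only then analyzes the $(\mathbf{\eta},\mathbf{\xi})$ pair with a single function $V_2=\mathbf{\eta}^T\mathcal{P}\mathbf{\eta}+\tfrac12\mathbf{\xi}^T\mathbf{\xi}$ and $\mathbf{\zeta}_2$ treated as a bounded exogenous input; your all-at-once weighted composite $V=\alpha V_\zeta+\beta V_\eta+V_\xi$ would also work but introduces unnecessary cross terms, since the $\mathbf{\zeta}$-to-rest coupling is one-directional.
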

\begin{proof}
See Section \ref{prof_th1} for the proof.
\end{proof}

In the analysis of Theorem \ref{th1}, we suppose that $f_i(\mathbf{x})$ is locally Lipshcitz for $i\in\mathcal{N}$. Actually, given that $f_i(\mathbf{x})$ is globally Lipshcitz for $i\in\mathcal{N}$, Theorem \ref{th1} can be further strengthened. In the following corollary,  we provide the corresponding result for the case in which $f_i(\mathbf{x})$ is globally Lipshcitz for $i\in\mathcal{N}$.
\begin{Corollary}\label{co1}
Suppose that  Assumptions \ref{ASS1}-\ref{ass_4} are satisfied and $\frac{\partial f_i(\mathbf{x})}{\partial x_i}$ for all $i\in\mathcal{N}$ are globally Lipschitz. Then, for any positive constant $v_1$, there exist positive constants $\theta^*,\sigma^*(v_1)$ such that for each $\theta\in(\theta^*,\infty)$, $\sigma\in(\sigma^*,\infty)$, there exists $\bar{T}\geq t_0$ such that for $t\geq \bar{T},$
\begin{equation}
||\mathcal{E}(t)||\leq v_1.
\end{equation}
\end{Corollary}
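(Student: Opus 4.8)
The plan is to re-run the Lyapunov argument behind Theorem~\ref{th1}, noting that local Lipschitz continuity entered that proof only to bound the gradient-mismatch terms on a bounded set; this is precisely what forced the restriction $\|\mathcal{E}(t_0)\|\leq v_1$ and made the gain thresholds grow with the size of that set. Under the present hypothesis each $\frac{\partial f_i(\mathbf{x})}{\partial x_i}$ is globally Lipschitz with a single constant $L$, so every such estimate becomes a globally valid quadratic inequality, which upgrades the conclusion to a global one and renders $\theta^*$ independent of the prescribed accuracy $v_1$.

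First I would treat the observer-error subsystem \eqref{eq6}. Each $2\times 2$ block of $\Delta$ has positive trace $\sigma k_{i1}$ and positive determinant $\sigma^2 k_{i2}$, so $\Delta$ is positive stable and $-\Delta$ is Hurwitz for every $\sigma>0$; since $\mathbf{d}_1(t)$ is bounded by Assumption~\ref{ass_4}, the linear system \eqref{eq6} is globally exponentially input-to-state stable. A high-gain scaling $(\zeta_{i1},\zeta_{i2}/\sigma)$ shows the ultimate bound of $\|\mathbf{\zeta}\|$ is $O(1/\sigma)$ and, being set by the forcing term alone, is independent of the initial condition. Hence for any $\epsilon_\zeta>0$ a sufficiently large $\sigma$ guarantees $\|\mathbf{\zeta}(t)\|\leq\epsilon_\zeta$ after a finite time, for all initial data.

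Next I would analyze the pair $(\mathbf{\xi},\mathbf{\eta})$ with $\mathbf{\zeta}_2$ regarded as an exogenous input, using the composite function $V=\mathbf{\eta}^T\bar{\Theta}^{-1}\mathbf{\eta}+\tfrac{c}{2}\|\mathbf{\xi}\|^2$ with $c>0$ to be fixed. The choice $P_\eta=\bar{\Theta}^{-1}$ is convenient because it yields $P_\eta M+M^TP_\eta=2(\mathcal{L}\otimes I_{N\times N}+\mathcal{A}_0)$ for $M=\bar{\Theta}(\mathcal{L}\otimes I_{N\times N}+\mathcal{A}_0)$, which is positive definite under Assumption~\ref{ass_3} (connectedness makes every in-degree positive, so the Laplacian-plus-pinning matrix has trivial kernel). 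This supplies the dissipation $-2\theta\,\underline{\lambda}\,\|\mathbf{\eta}\|^2$ with $\underline{\lambda}:=\lambda_{\min}(\mathcal{L}\otimes I_{N\times N}+\mathcal{A}_0)>0$. Using $\Upsilon(\mathbf{x}^*)=0$ together with Assumption~\ref{Assu2} furnishes $-m\|\mathbf{\xi}\|^2$ from the $\mathbf{\xi}$-channel, while the global constant $L$ bounds the gradient mismatch $\|\left[\frac{\partial f_i}{\partial x_i}(\mathbf{y}_i)\right]_{vec}-\Upsilon(\mathbf{x})\|\leq L\|\mathbf{\eta}\|$ and the driving term $\|\dot{\mathbf{\xi}}\|\leq L(\|\mathbf{\eta}\|+\|\mathbf{\xi}\|)+\|\mathbf{\zeta}_2\|$ on all of $\mathbb{R}^N$. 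Collecting terms gives $\dot V\leq-(2\theta\,\underline{\lambda}-c_2L)\|\mathbf{\eta}\|^2-cm\|\mathbf{\xi}\|^2+c_3\|\mathbf{\eta}\|\|\mathbf{\xi}\|+c_4\|\mathbf{\zeta}_2\|\,\|(\mathbf{\xi},\mathbf{\eta})\|$; because $L$ is a fixed global constant, after fixing $c$ and completing squares on the cross term a single threshold $\theta^*$ (depending on $L$, $c$ and the graph, but not on $v_1$ or the initial condition) makes the quadratic form negative definite, yielding $\dot V\leq-\alpha\|(\mathbf{\xi},\mathbf{\eta})\|^2+\beta\|\mathbf{\zeta}_2\|\,\|(\mathbf{\xi},\mathbf{\eta})\|$, i.e. global ISS of $(\mathbf{\xi},\mathbf{\eta})$ with respect to $\mathbf{\zeta}_2$ with ultimate bound $O(\epsilon_\zeta)$.

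Finally I would cascade the two pieces exactly as the closed loop \eqref{clo_sys} suggests: fix $\theta>\theta^*$ once, then pick $\sigma^*(v_1)$ large enough that the induced $\epsilon_\zeta$, and hence the $O(\epsilon_\zeta)$ ultimate bounds of both $\mathbf{\zeta}$ and $(\mathbf{\xi},\mathbf{\eta})$, drive $\|\mathcal{E}(t)\|\leq v_1$ for $t\geq\bar T$; no initial-condition constraint survives since every inequality above holds globally. The step I expect to be the main obstacle is the $\mathbf{\eta}$--$\mathbf{\xi}$ feedback interconnection: one must confirm that for large $\theta$ the fast consensus dynamics dominates the coupling $-\mathbf{1}_N\otimes\dot{\mathbf{\xi}}$ and that the monotonicity margin $m$ survives after the cross terms are absorbed, and it is precisely the globally (rather than only locally) quadratic bounds furnished by global Lipschitzness that make this domination hold uniformly in the initial condition.
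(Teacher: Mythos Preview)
Your proposal is correct and follows essentially the same approach as the paper: the paper's own proof is a single sentence observing that, under global Lipschitzness of $\frac{\partial f_i}{\partial x_i}$, the compact-set-dependent constants $l_3,\dots,l_6$ in the proof of Theorem~\ref{th1} become global constants, so the Lyapunov inequality \eqref{eq10} holds on all of $\mathbb{R}^{N^2+N}$ and the initial-condition restriction disappears, which is exactly what you argue in detail. Your choice $P_\eta=\bar{\Theta}^{-1}$ in place of the paper's generic $\mathcal{P}$ from the Lyapunov equation is a harmless (and more explicit) variant, since for the undirected graph $\mathcal{L}\otimes I_{N\times N}+\mathcal{A}_0$ is symmetric and your computation $\bar{\Theta}^{-1}M+M^T\bar{\Theta}^{-1}=2(\mathcal{L}\otimes I_{N\times N}+\mathcal{A}_0)$ goes through.
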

\begin{proof}
The result can be derived by noticing that if $\frac{\partial f_i(\mathbf{x})}{\partial x_i}$ for all $i\in\mathcal{N}$ are globally Lipschitz, the proof of Theorem \ref{th1} holds for any initial condition.
\end{proof}

\begin{Remark}
In Theorem \ref{th1} and Corollary \ref{co1}, we establish the convergence results for the proposed method in \eqref{eq8}-\eqref{eq9} and show that the proposed method can drive the players' actions to an arbitrarily small neighborhood of the Nash equilibrium by tuning the control parameters. Comparing Theorem \ref{th1} with Corollary \ref{co1}, it can be seen that if $\frac{\partial f_i(\mathbf{x})}{\partial x_i}$ for all $i\in\mathcal{N}$ are globally Lipschitz, we can obtain a global convergence result. However, if $\frac{\partial f_i(\mathbf{x})}{\partial x_i}$ is locally Lispchitz for $i\in\mathcal{N}$, it is required that the initial errors should be bounded (though the bound can be arbitrarily large).
\end{Remark}

In the following section, an asymptotic seeker that can accommodate both the unmodeled complexities and time-varying disturbances in the system dynamics will be presented.
\subsection{Strategy design with an asymptotic state observer}
In this section, we consider the Nash equilibrium seeking in which
\begin{equation}
\dot{x}_i=u_i+ g_i(\mathbf{x})+d_i(t).
\end{equation}

The upcoming analysis will be proceeded based on the following assumption.
\begin{Assumption}\label{ass_5}
The functions $g_i(\mathbf{x})$ for $i\in\mathcal{N}$ are sufficiently smooth. Moreover, $\frac{\partial g_i(\mathbf{x})}{\partial x_j}$ and $\frac{\partial^2 g_i(\mathbf{x})}{\partial x_j\partial x_k}$ for $i,j,k \in\mathcal{N}$ are bounded given that $\mathbf{x}$ is bounded. In addition, the disturbances $d_i(t)$ for $i\in\mathcal{N}$ are sufficiently smooth and $\dot{d}_i(t),\ddot{d}_i(t)$ are bounded for $i\in\mathcal{N},t\geq t_0.$
\end{Assumption}

To accommodate the unmodeled dynamics and disturbance term, let $z_i=g_i(\mathbf{x})+d_i(t).$ Then, the dynamics of player $i$ can be written as
\begin{equation}
\begin{aligned}
\dot{x}_i&=u_i+z_i\\
\dot{z}_i&=\dot{g}_i(\mathbf{x})+\dot{d}_i(t).
\end{aligned}
\end{equation}

To seek the Nash equilibrium, let
\begin{equation}\label{eq11}
u_i=-\frac{\partial f_i}{\partial x_i}(\mathbf{y}_i) -\hat{z}_i
\end{equation}
in which $\mathbf{y}_i=[y_{i1},y_{i2},\cdots,y_{iN}]^T$. Moreover,
\begin{equation}\label{eq12}
\begin{aligned}
\dot{y}_{ij}&=-\theta_{ij}(\sum_{k=1}^N a_{ik}(y_{ij}-y_{kj})+a_{ij}(y_{ij}-x_j))\\
\dot{\hat{x}}_i&=u_i+\hat{z}_i+(k_i^s+c_i)(x_i-\hat{x}_i)\\
\dot{\hat{z}}_i&=k_i^sc_i (x_i-\hat{x}_i)+\beta_i sgn(x_i-\hat{x}_i),
\end{aligned}
\end{equation}
where $\theta_{ij}=\theta \bar{\theta}_{ij}$, $\theta$ is a positive control gain to be further determined and $\bar{\theta}_{ij}$ is a fixed control parameter. Moreover, $sgn(\cdot)$ is the standard signum function and $k_i^s,c_i,\beta_i$ are positive control gains to be further determined.
\begin{Remark}
The proposed method is motivated by \cite{YeAT16} in which the unknown dynamics and disturbances are estimated  based on the RISE method (see, e.g., \cite{YeAT16}-\cite{XianTAC04}). Compared with the strategy in Section \ref{part_1}, we see that the main difference is that a signum function is further included in the observer to achieve better convergence results.
\end{Remark}

To facilitate the subsequent analysis, define the error signals as
\begin{equation}
\begin{aligned}
\zeta_{i1}&=x_i-\hat{x}_i, \zeta_{i2}=z_i-\hat{z}_i\\
\xi_i&=x_i-x_i^*, \eta_{ij}=y_{ij}-x_j.
\end{aligned}
\end{equation}

Then,
\begin{equation}
\begin{aligned}
\dot{\zeta}_{i1}=&\dot{x}_i-\dot{\hat{x}}_i\\
=&z_i-\hat{z}_i-(k_i^s+c_i)(x_i-\hat{x}_i)\\
=&-(k_i^s+c_i)\zeta_{i1}+\zeta_{i2}\\
\dot{\zeta}_{i2}=&\dot{z}_i-\dot{\hat{z}}_i\\
=&\dot{g}_i(\mathbf{x})+\dot{d}_i(t)-c_ik_i^s \zeta_{i1}-\beta_i sgn(\zeta_{i1}).
\end{aligned}
\end{equation}
Moreover,
\begin{equation}
\dot{\xi}_i=\dot{x}_i=-\frac{\partial f_i}{\partial x_i}(\mathbf{y}_i)+\zeta_{i2},
\end{equation}
and
\begin{equation}
\dot{\eta}_{ij}=-\theta_{ij}(\sum_{k=1}^Na_{ik}(\eta_{ij}-\eta_{kj})+a_{ij}\eta_{ij})-\dot{\xi}_j.
\end{equation}
Writing the error system in the concatenated vector form gives the observation error subsystem as
\begin{equation}\label{eq2}
\begin{aligned}
\dot{\mathbf{\zeta}}_1=&-(k_s+c)\mathbf{\zeta}_1+\mathbf{\zeta}_2\\
\dot{\mathbf{\zeta}}_2=&-ck_s \mathbf{\zeta}_1-\beta sgn(\mathbf{\zeta}_1)+[\dot{g}_i(\mathbf{x})]_{vec}+[\dot{d}_i(t)]_{vec},
\end{aligned}
\end{equation}
the optimization error subsystem as
\begin{equation}\label{eq5}
\dot{\mathbf{\xi}}=-\left[\frac{\partial f_i}{\partial x_i}(\mathbf{y}_i)\right]_{vec}+\mathbf{\zeta}_2,
\end{equation}
and the consensus error subsystem as
\begin{equation}\label{eq4}
\dot{\mathbf{\eta}}=-\theta \bar{\Theta}(\mathcal{L}\otimes I_{N\times N}+\mathcal{A}_0)\mathbf{\eta}-\mathbf{1}_N\otimes \dot{\mathbf{\xi}},
\end{equation}
where $k_s=\text{diag}\{k_i^s\}, c=\text{diag}\{c_i\},\beta=\text{diag}\{\beta_i\},$ $\mathbf{\zeta}_1=[\zeta_{i1}]_{vec},\mathbf{\zeta}_2=[\zeta_{i2}]_{vec},\mathbf{\xi}=[\xi_{ij}]_{vec},\mathbf{\eta}=[\eta_{ij}]_{vec},\bar{\Theta}=\text{diag}\{\bar{\theta}_{ij}\}$ and $\mathbf{y}=[y_{ij}]_{vec}$.

To facilitate the subsequent analysis, define a filtered signal as
\begin{equation}
\mathbf{\gamma}=\dot{\mathbf{\zeta}}_1+c \mathbf{\zeta}_1.
\end{equation}

Then,
\begin{equation}\label{eq3}
\begin{aligned}
\dot{\mathbf{\gamma}}=&-(k_s+c)\dot{\mathbf{\zeta}}_1+\dot{\mathbf{\zeta}}_2+c \dot{\mathbf{\zeta}}_1\\
=&-k_s\mathbf{\gamma}-\beta sgn(\mathbf{\zeta}_1)+[\dot{g}_i(\mathbf{x})]_{vec}+[\dot{d}_i(t)]_{vec}.
\end{aligned}
\end{equation}

Intuitively speaking, the signum function in the RISE-based method would enlarge the regulation force of the integration part for the observer when the error is very small thus leading to better control performance. Hence, we may expect enhanced stability  result for the RISE-based method compared with the result for the PI-based method in  \eqref{clo_sys}. Denote $\mathcal{E}_f=[\mathbf{\zeta}_1^T,\mathbf{\zeta}_2^T,\mathbf{\eta}^T,\mathbf{\gamma}^T]^T.$ In the following, we show that $||\mathcal{E}_f(t)||$ would be vanishing to zero as $t\rightarrow \infty$.
\begin{Theorem}\label{th2}
Suppose that Assumptions \ref{ASS1}-\ref{ass_3} and \ref{ass_5} are satisfied. Then, for any positive constant $v_1$, there exists a $\theta^*(v_1)>0$ such that for each $\theta\in(\theta^*,\infty)$, there exists a positive constant $k_s^r(v_1,\theta)$ such that for each $k_i^s\in(k_s^r,\infty),i\in\mathcal{N}$, there exists a positive constant $c^r(v_1,\theta,k_s)$ such that for each $c_i\in(c^r,\infty),i\in\mathcal{N},$ there exists a positive constant $\beta^r(v_1,\theta,c,k_s)$ such that for each $\beta_i\in(\beta^r,\infty),i\in\mathcal{N},$
\begin{equation}
||\mathcal{E}_f(t)||\rightarrow 0 \ \ \text{as} \ \ t\rightarrow \infty
\end{equation}
given that $||\mathcal{E}_f(t_0)||\leq v_1.$
\end{Theorem}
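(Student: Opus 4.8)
The plan is to prove asymptotic convergence of the \emph{full} closed-loop state $(\zeta_1,\gamma,\eta,\xi)$ from the prescribed ball $\|\mathcal{E}_f(t_0)\|\le v_1$ by means of a single composite Lyapunov function augmented with the Robust-Integral-of-the-Sign-of-the-Error (RISE) auxiliary function, and then to read off the claim for $\mathcal{E}_f$. Although $\xi$ does not appear in $\mathcal{E}_f$, the consensus error is driven by $\dot{\xi}$ through \eqref{eq4} and the gradient mismatch couples $\xi$ with $\eta$, so $\xi$ must be carried along; its convergence to zero (the players reaching the Nash equilibrium) then comes as a by-product. I would work throughout on a sublevel set fixed by $v_1$, on which Assumptions \ref{ASS1} and \ref{ass_5} make $\tfrac{\partial f_i}{\partial x_i}$, $g_i$ and its first two derivatives Lipschitz and bounded; a final step shows this set is invariant, so the a priori bounds used below are legitimate and the maximal interval of existence is $[t_0,\infty)$.

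First I would build the RISE machinery on the filtered subsystem \eqref{eq3}. Writing the lumped forcing as $N_d=[\dot{g}_i(\mathbf{x})]_{vec}+[\dot{d}_i(t)]_{vec}$ and using $\gamma=\dot{\zeta}_1+c\zeta_1$, an integration-by-parts estimate of $\int_{t_0}^t \gamma^T(N_d-\beta\,\text{sgn}(\zeta_1))\,d\tau$ shows this integral is bounded above by a constant $\zeta_b$ depending only on $\zeta_1(t_0)$ and $N_d(t_0)$, provided each $\beta_i$ exceeds a bound of the form $\sup|N_{d,i}|+\tfrac{1}{c_i}\sup|\dot{N}_{d,i}|$. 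This is exactly where Assumption \ref{ass_5} enters: bounding $N_d$ and $\dot{N}_d$ requires boundedness of $\mathbf{x}$, $\dot{\mathbf{x}}$ together with $\partial g_i/\partial x_j$, $\partial^2 g_i/\partial x_j\partial x_k$, $\dot{d}_i$ and $\ddot{d}_i$. Setting $P=\zeta_b-\int_{t_0}^t \gamma^T(N_d-\beta\,\text{sgn}(\zeta_1))\,d\tau\ge 0$ yields the crucial cancellation $\gamma^T\dot{\gamma}+\dot{P}=-\gamma^T k_s\gamma$, which removes both the signum term and $N_d$ from the Lyapunov derivative and keeps the later Barbalat argument clean.

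Next I would take $V=\tfrac12\xi^T\xi+\tfrac12\eta^TQ\eta+\tfrac12\zeta_1^T\zeta_1+\tfrac12\gamma^T\gamma+P$, where $Q>0$ solves a Lyapunov equation for $-\bar{\Theta}(\mathcal{L}\otimes I_{N\times N}+\mathcal{A}_0)$; the matrix $\mathcal{L}\otimes I_{N\times N}+\mathcal{A}_0$ is symmetric positive definite because, under Assumption \ref{ass_3}, the pinning term $a_{ij}(y_{ij}-x_j)$ grounds the consensus Laplacian of the connected graph. Differentiating and substituting $\zeta_2=\gamma+k_s\zeta_1$ (which follows from $\gamma=-k_s\zeta_1+\zeta_2$), I expect $\dot{V}\le -m\|\xi\|^2-\theta\lambda\|\eta\|^2-c\|\zeta_1\|^2-k_s\|\gamma\|^2+(\text{cross terms})$, where $-m\|\xi\|^2$ comes from strong monotonicity (Assumption \ref{Assu2}, using $\Upsilon(\mathbf{x}^*)=0$) and the gradient-at-$\mathbf{y}_i$ mismatch is bounded by $\|\eta\|$ through local Lipschitzness. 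The gains are then fixed in the nested order dictated by the statement: $\theta$ large to absorb the $\eta$-couplings $\eta^TQ(\mathbf{1}_N\otimes\dot{\xi})$ and $\|\xi\|\|\eta\|$; then $k_s$ large for the $\gamma$-cross terms; then $c$ large, because the term $k_s\xi^T\zeta_1$ arising from $\xi^T\zeta_2$ produces a contribution of order $\tfrac{k_s^2}{m}\|\zeta_1\|^2$, so $c$ must dominate $k_s$, which is precisely why $c^r$ depends on $k_s$; finally $\beta$ large to guarantee $P\ge 0$.

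With the gains so chosen, $\dot{V}\le-\kappa(\|\xi\|^2+\|\eta\|^2+\|\zeta_1\|^2+\|\gamma\|^2)$ for some $\kappa>0$ on the sublevel set, which simultaneously proves invariance of that set (closing the bootstrap and giving $T_f=\infty$) and delivers the boundedness used above. Since $V$ is bounded below and nonincreasing and all signals and their derivatives are bounded, Barbalat's lemma forces $\|\xi\|,\|\eta\|,\|\zeta_1\|,\|\gamma\|\to 0$; then $\zeta_2=\gamma+k_s\zeta_1\to 0$, whence $\|\mathcal{E}_f(t)\|\to 0$. The step I expect to be the main obstacle is the circularity between the RISE gain condition and the Lyapunov estimate: the admissible $\beta$ presupposes bounds on $N_d$ and $\dot{N}_d$, hence on the whole trajectory, yet those bounds are only supplied once $\dot{V}\le 0$ has been established on the invariant set; making this bootstrap rigorous, together with the $c\gtrsim k_s^2/m$ domination, is the delicate part of the argument.
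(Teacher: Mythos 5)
Your overall architecture is the same as the paper's (composite Lyapunov function with a RISE auxiliary term, the substitution $\mathbf{\zeta}_2=\mathbf{\gamma}+k_s\mathbf{\zeta}_1$, nested gain selection $\theta\rightarrow k_s\rightarrow c\rightarrow\beta$, and Barbalat's lemma at the end), but there is one structural decision that creates a genuine gap: you lump the unmodeled term into the sign-compensated forcing, i.e.\ you define $N_d=[\dot{g}_i(\mathbf{x})]_{vec}+[\dot{d}_i(t)]_{vec}$ and require $\beta_i>\sup|N_{d,i}|+\tfrac{1}{c_i}\sup|\dot{N}_{d,i}|$ so that $P\geq 0$. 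Those suprema are taken along the closed-loop trajectory: $\dot{g}_i(\mathbf{x})$ involves $\dot{\mathbf{x}}$, and $\dot{N}_{d,i}$ involves $\ddot{\mathbf{x}}$ and the Hessian of $g_i$, all of which Assumption \ref{ass_5} bounds only on compact sets, with no growth control. The bootstrap you propose to close this cannot be closed under the stated hypotheses: the invariant sublevel set of your $V$ must contain $P(t_0)=\zeta_b$, which contains the term $\mathbf{\zeta}_1^T(t_0)\beta\,\mathrm{sgn}(\mathbf{\zeta}_1(t_0))$ and hence grows linearly in $\beta$; the bound on $\dot{g}_i,\ddot{g}_i$ over that set therefore grows with $\beta$, which in turn forces a larger $\beta$. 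Nothing in Assumption \ref{ass_5} guarantees this iteration has a fixed point, and it also clashes with the theorem's quantifier order, in which $\beta^r$ is chosen last as a function of gains and data fixed beforehand, not of trajectory bounds that themselves depend on $\beta$.

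The paper avoids this entirely by splitting the forcing differently: only the trajectory-independent part $N_c(t)=[\dot{d}_i(t)]_{vec}$ enters the auxiliary function $V_3$, so the condition $\beta_i\geq\frac{\max\{c_i\}\sup_{t\geq t_0}\|[\dot{d}_i(t)]_{vec}\|_1}{\min\{c_i\}}+\frac{\sup_{t\geq t_0}\|[\ddot{d}_i(t)]_{vec}\|_1}{\min\{c_i\}}$ (via Lemma 5 of \cite{HuSCL12}) depends only on $c$ and the uniform disturbance bounds from Assumption \ref{ass_5} --- never on the trajectory. The state-dependent term $\mathbf{\gamma}^T[\dot{g}_i(\mathbf{x})]_{vec}$ then survives in $\dot{V}$ (the cancellation there removes only the signum term and $N_c$), and is estimated as $l_5\|\mathbf{\gamma}\|\|\mathbf{\eta}\|+l_5\|\mathbf{\gamma}\|\|\mathbf{\xi}\|+l_6\|\mathbf{\gamma}\|\|\mathbf{\zeta}_2\|$ using $\dot{\mathbf{x}}=-\left[\frac{\partial f_i}{\partial x_i}(\mathbf{y}_i)\right]_{vec}+\mathbf{\zeta}_2$; these cross terms are absorbed by the damping gains $k_s$, $c$, $\theta$ via Young's inequality, which is exactly what produces the $c\gtrsim (\max\{k_i^s\})^2$ domination you correctly anticipated. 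So your high-gain bookkeeping for $\theta$, $k_s$, $c$ is right; the fix is to move $[\dot{g}_i(\mathbf{x})]_{vec}$ out of the RISE-compensated term and into the damped cross terms, leaving $\beta$ to handle only the exogenous disturbance.
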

\begin{proof}
See Section \ref{prof_th2} for the proof.
\end{proof}
\begin{Remark}
Compared with the results in Section \ref{part_1}, we see that by utilizing the RISE-based method, asymptotic convergence results can be obtained though the system dynamics are subject to both uncertainties (i.e., $g_i(\mathbf{x})$) and disturbances (i.e., $d_i(t)$). Moreover, if the players' dynamics are governed by \eqref{dyna_g}, it can be seen that the RISE-based method would still result in an asymptotic stability result. Note that the PI-based method is given as it is smooth and simpler than the RISE-based method. In addition, compared with the PI-based method, stricter requirements on the disturbance are required for the RISE-based method (see Assumptions \ref{ass_4} and \ref{ass_5}). Furthermore, though it might be possible to apply the PI-based method to the case in which unmodeled dynamics and disturbance coexist in the players' dynamics by suitably tuning the control gains, the RISE-based method serves as a more precise control method compared with the PI-based method.
\end{Remark}

\begin{Remark}
In \cite{Yecyber16}, potential games with disturbance were solved by utilizing the
real-time measurements of the players' costs. Noticing that each player's objective
function might be determined by all the engaged players'
actions, this paper differs from \cite{Yecyber16} to utilize only local communication among the players, which benefits the applications in distributed systems. Different from \cite{Yecyber16} that only considered potential games with bounded disturbances, this paper accommodates general networked games with both disturbances and unmodeled dynamics.  Moreover, this paper proposes a method that can achieve robust asymptotic equilibrium seeking under distributed networks, while in \cite{Yecyber16}, we only showed that the players' actions can be driven to a neighborhood of the Nash equilibrium.
\end{Remark}

\begin{Remark}
With the increasing requirements on the precision of control algorithms, various disturbance estimation and attenuation methods have been proposed. For example, equivalent input disturbance based estimators, disturbance observer based control and extended state observers, to mention just a few, are typical effective methods for disturbance estimation and attenuation \cite{Chen16}. In this paper, the robustness of the Nash equilibrium seeking strategy is achieved by utilizing extended state observer based approaches as they require least plant information among the aforementioned disturbance estimation and attenuation methods \cite{Li12}. Moreover, the proposed extended state observers are designed based on PI or RISE, which are simple for practical implementation. If more plant information is available, it would be interesting future works to further reduce the order of the observers by utilizing other disturbance estimation and attenuation methods.
\end{Remark}

\section{Application to velocity-actuated mobile sensor networks}\label{v_5}
In \cite{Stankovic}, the authors defined a connectivity control game for a network of mobile sensors. By supposing that the sensors try to find a tradeoff between the local objective (e.g., source seeking, positioning) and the global objective (e.g., preserve connectivity with the other sensors), the cost function of sensor $i$ is defined as \cite{Stankovic}
\begin{equation}
J_i(\mathbf{x})=l^c_i(x_i)+\bar{l}^g_i(\mathbf{x}),
\end{equation}
where
\begin{equation}
l^c_i(x_i)=x_i^Tr_{ii}x_i+x_i^Tr_i+b_i,
\end{equation}
and
\begin{equation}\label{eq13}
\bar{l}^g_i(\mathbf{x})=\sum_{j\in\mathcal{N}_i}c_{ij}||x_i-x_j||^2.
\end{equation}
Moreover, $x_i=[x_{i1},x_{i2}]^T\in\mathbb{R}^2$ denotes the position of sensor $i$, $r_{ii},r_i,b_i,c_{ij}>0$ are constant matrices or vectors of compatible dimension and $\mathcal{N}_i$ denotes the neighbor set of sensor $i$ in the communication graph.  In addition, for each $i\in\mathcal{N},$ $r_{ii}$ is symmetric positive definite and strictly diagonally dominant. The objective function in \eqref{eq13} can be treated as a cost that motivates the sensors to keep connectivity with their neighbors. Different from \cite{Stankovic}, in this paper, we consider the following global objective:
\begin{equation}\label{eq14}
l^g_i(\mathbf{x})=\sum_{j\in\mathcal{N}^p_i}c_{ij}||x_i-x_j||^2,
\end{equation}
where $\mathcal{N}^p_i$ denotes the \textbf{physical neighbor set} of sensor $i$. This basically means that if sensor $j\in \mathcal{N}^p_i$, then, the objective function of sensor $i$ depends on the position of sensor $j$ but sensor $j$ is not necessarily a neighbor of sensor $i$ in the \textbf{communication graph}. The modification is reasoned as follows. If sensor $j$, where $j\in \mathcal{N}^p_i$, is a neighbor of sensor $i$ in the communication graph, then, the corresponding term $||x_i-x_j||^2$ denotes the willingness of sensor $i$  to keep its connectivity with sensor $j$. Else if sensor $j$, where $j\in\mathcal{N}^p_i$, is not a neighbor of sensor $i$ in the communication graph, the corresponding term $||x_i-x_j||^2$ denotes sensor $i$'s willingness to get closer to sensor $j$ (such that it may have a new connection to sensor $j$). By such a modification, the objective function in \eqref{eq13} can be treated as a special case of \eqref{eq14} by enforcing $\mathcal{N}^p_i=\mathcal{N}_i.$

In the following, we consider a network of sensors in which sensor $i$'s objective function is
\begin{equation}\label{eq16}
f_i(\mathbf{x})=l_i^c(x_i)+l_i^g(\mathbf{x}).
\end{equation}

Suppose that the sensors' dynamics are governed by
\begin{equation}\label{fir_dis}
\dot{x}_i=u_i+d_i(t),
\end{equation}
and the control strategies are given in \eqref{eq8}-\eqref{eq9} (by directly adapting $u_i,\hat{z}_i,\hat{x}_i$ therein to two-dimensional column vectors). Then, the following corollary holds.
\begin{Corollary}\label{co2}
Suppose that Assumptions \ref{ass_3}-\ref{ass_4} are satisfied. Then, for any positive constant $v_1$, there exist positive constants $\theta^*,\sigma^*(v_1)$ such that for each $\theta\in(\theta^*,\infty)$, $\sigma\in(\sigma^*,\infty)$, there exists a constant $\bar{T}\geq t_0$ such that
\begin{equation}
||\mathcal{E}(t)||\leq v_1, \ \ \forall t>\bar{T}.
\end{equation}
\end{Corollary}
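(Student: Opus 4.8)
The plan is to verify that the connectivity control game \eqref{eq16} together with the dynamics \eqref{fir_dis} meets every hypothesis of Corollary \ref{co1}, and then to invoke that corollary directly. Since Assumption \ref{ass_3} (connected undirected communication graph) and Assumption \ref{ass_4} (smoothness and boundedness of $\dot{d}_i$) are assumed outright, the only genuine work is to check Assumptions \ref{ASS1} and \ref{Assu2} for the specific $f_i$ in \eqref{eq16}, and to confirm the global-Lipschitz condition that upgrades Theorem \ref{th1} to Corollary \ref{co1}.

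First I would observe that each $f_i$ is a quadratic function of $\mathbf{x}$, hence $C^{\infty}$, so Assumption \ref{ASS1} is immediate; moreover its partial gradient
\[
\frac{\partial f_i}{\partial x_i}=2r_{ii}x_i+r_i+2\sum_{j\in\mathcal{N}^p_i}c_{ij}(x_i-x_j)
\]
is affine in $\mathbf{x}$ and therefore globally Lipschitz. This is exactly the extra hypothesis needed for the global form of Corollary \ref{co1}, so once strong monotonicity is in hand the conclusion follows globally.

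The central step is Assumption \ref{Assu2}. Writing $w_i$ for the block of $\mathbf{x}-\mathbf{z}$ associated with player $i$ and using that $\Upsilon$ is affine, I would expand
\[
(\mathbf{x}-\mathbf{z})^T(\Upsilon(\mathbf{x})-\Upsilon(\mathbf{z}))=\sum_{i\in\mathcal{N}}2\,w_i^Tr_{ii}w_i+2\sum_{i\in\mathcal{N}}\sum_{j\in\mathcal{N}^p_i}c_{ij}\,w_i^T(w_i-w_j).
\]
The first sum is bounded below by $2\min_{i}\lambda_{min}(r_{ii})\,\|\mathbf{x}-\mathbf{z}\|^2$ because each $r_{ii}$ is symmetric positive definite, and the second sum is a weighted graph-Laplacian quadratic form over the physical interaction graph: pairing the $(i,j)$ and $(j,i)$ terms collapses them into $\sum c_{ij}\|w_i-w_j\|^2\geq 0$, so the coupling contributes a nonnegative amount. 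Hence Assumption \ref{Assu2} holds with $m=2\min_{i}\lambda_{min}(r_{ii})>0$. It remains to address the dimensional mismatch, namely that Theorem \ref{th1} and Corollary \ref{co1} were stated for scalar actions $x_i\in\mathbb{R}$ whereas here $x_i\in\mathbb{R}^2$; I would note that the estimator and observer dynamics \eqref{eq8}-\eqref{eq9} and the entire Lyapunov argument carry over verbatim when each scalar is replaced by its two-dimensional counterpart, since the error system retains the same cascade structure with the relevant matrices tensored by $I_2$. With Assumptions \ref{ASS1}-\ref{ass_4} all in force and the affine gradient established, Corollary \ref{co1} then delivers $\|\mathcal{E}(t)\|\leq v_1$ for all $t>\bar{T}$.

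I expect the main obstacle to be the strong-monotonicity verification when the physical neighbor relation or the weights $c_{ij}$ fail to be symmetric, since then the clean Laplacian pairing breaks and the coupling sum need not be nonnegative on its own. I would handle this by exploiting the strict diagonal dominance and positive definiteness of each $r_{ii}$ to dominate the residual cross terms, ensuring that the symmetric part of the pseudo-gradient Jacobian stays uniformly positive definite and that a (possibly smaller) constant $m>0$ still exists, which is all that Corollary \ref{co1} requires.
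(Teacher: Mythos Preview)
Your reduction to Corollary~\ref{co1} hinges on establishing Assumption~\ref{Assu2} (strong monotonicity of $\Upsilon$) for the connectivity game, and this is where the argument breaks down. Your primary calculation assumes that the physical-neighbor relation and the weights $c_{ij}$ are symmetric, so that the coupling terms pair into a Laplacian quadratic form. But the paper explicitly allows $\mathcal{N}_i^p$ to be asymmetric (indeed the simulation example has $2\in\mathcal{N}_1^p$ while $1\notin\mathcal{N}_2^p$), and nothing ties $c_{ij}$ to $c_{ji}$. Your fallback---using strict diagonal dominance of each $r_{ii}$ to force the symmetric part of the pseudo-gradient Jacobian to be positive definite---does not work in general: strict diagonal dominance of $r_{ii}$ makes the full Jacobian $\mathcal{R}$ strictly diagonally dominant with positive diagonal, hence Hurwitz by Gershgorin, but Hurwitz does \emph{not} imply that $\mathcal{R}+\mathcal{R}^T$ is positive definite. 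A simple instance with small $r_{ii}$ and several players all placing large weight on a single common physical neighbor (who has no outgoing physical links) produces an $\mathcal{R}$ that is strictly diagonally dominant yet has indefinite symmetric part, so Assumption~\ref{Assu2} genuinely fails and Corollary~\ref{co1} cannot be invoked.

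The paper sidesteps strong monotonicity altogether. It observes that $\mathcal{R}$ is strictly diagonally dominant with positive diagonal (this is where the hypothesis on $r_{ii}$ enters), concludes $-\mathcal{R}$ is Hurwitz, and then solves the Lyapunov equation $\Gamma_1\mathcal{R}+\mathcal{R}^T\Gamma_1=\Gamma_2$ for symmetric positive definite $\Gamma_1,\Gamma_2$. The proof of Theorem~\ref{th1} is then rerun with the quadratic $\frac{1}{2}\mathbf{\xi}^T\mathbf{\xi}$ in $V_2$ replaced by $\mathbf{\xi}^T\Gamma_1\mathbf{\xi}$; since the game is quadratic, $\left[\frac{\partial f_i}{\partial x_i}(\mathbf{x})\right]_{vec}=\mathcal{R}\mathbf{\xi}+\text{const}$, and the time derivative produces $-\mathbf{\xi}^T\Gamma_2\mathbf{\xi}\le -\lambda_{\min}(\Gamma_2)\|\mathbf{\xi}\|^2$, which plays the role that $-m\|\mathbf{\xi}\|^2$ played before. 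The global-Lipschitz observation you made is still needed (and correct), but the missing idea is this weighted Lyapunov function coming from the Hurwitz property of $\mathcal{R}$, not a direct verification of Assumption~\ref{Assu2}.
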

\begin{proof}
See Section \ref{prof_co2} for the proof.
\end{proof}

Corollary \ref{co2} considers the sensor connectivity game by supposing that the sensors' dynamics are subject to time-varying disturbances only. In the following, we consider the case in which the sensors' dynamics are given by
\begin{equation}\label{fis_un}
\dot{x}_i=u_i+g_i(\mathbf{x})+d_i(t),
\end{equation}
and the control strategies are given in \eqref{eq11}-\eqref{eq12} (by directly adapting $u_i,\hat{z}_i,\hat{x}_i$ therein to two-dimensional column vectors). Then, the following result can be obtained.
\begin{Corollary}\label{co3}
Suppose that Assumptions \ref{ass_3} and \ref{ass_5} are satisfied. Then, for any positive constant $v_1$, there exists a $\theta^*(v_1)>0$ such that for each $\theta\in(\theta^*,\infty)$, there exists a positive constant $k_s^r(v_1,\theta)$ such that for each $k_i^s\in(k_s^r,\infty),i\in\mathcal{N}$, there exists a positive constant $c^r(v_1,\theta,k_s)$ such that for each $c_i\in(c^r,\infty),i\in\mathcal{N},$ there exists a positive constant $\beta^r(v_1,\theta,c,k_s)$ such that for each $\beta_i\in(\beta^r,\infty),i\in\mathcal{N}$
\begin{equation}
||\mathcal{E}_f(t)||\rightarrow 0 \ \ \text{as} \ \ t\rightarrow \infty
\end{equation}
given that $||\mathcal{E}_f(t_0)||\leq v_1.$
\end{Corollary}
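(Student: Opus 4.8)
The plan is to derive Corollary \ref{co3} as a direct instantiation of Theorem \ref{th2} for the connectivity game with cost \eqref{eq16}, dynamics \eqref{fis_un}, and the RISE-based observer \eqref{eq11}--\eqref{eq12}. Theorem \ref{th2} requires Assumptions \ref{ASS1}--\ref{ass_3} and \ref{ass_5}; since the hypotheses of the corollary already supply Assumptions \ref{ass_3} and \ref{ass_5}, the only genuine task is to check that the specific objective $f_i(\mathbf{x})=l_i^c(x_i)+l_i^g(\mathbf{x})$ satisfies Assumptions \ref{ASS1} and \ref{Assu2}. Once that is done, the gain cascade $\theta^*,k_s^r,c^r,\beta^r$ and the conclusion $||\mathcal{E}_f(t)||\to0$ are inherited verbatim from Theorem \ref{th2}.

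Assumption \ref{ASS1} is immediate: $l_i^c$ is quadratic in $x_i$ and each coupling term $c_{ij}||x_i-x_j||^2$ is quadratic, so $f_i$ is a polynomial and hence twice continuously differentiable. For Assumption \ref{Assu2} I would first compute the pseudo-gradient explicitly,
\begin{equation}
\frac{\partial f_i}{\partial x_i}=2r_{ii}x_i+r_i+2\sum_{j\in\mathcal{N}^p_i}c_{ij}(x_i-x_j),
\end{equation}
so that $\Upsilon(\mathbf{x})$ is affine, $\Upsilon(\mathbf{x})=A\mathbf{x}+\mathbf{b}$, and strong monotonicity is equivalent to $A+A^T\succ0$ with constant $m=\tfrac{1}{2}\lambda_{min}(A+A^T)$. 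I would decompose $A=2\mathbf{R}+2\mathbf{C}$, where $\mathbf{R}=\text{diag}\{r_{ii}\}$ carries the strongly convex local terms and $\mathbf{C}$ is the coupling operator of $l_i^g$, with diagonal block $\big(\sum_{j\in\mathcal{N}^p_i}c_{ij}\big)I$ and $(i,j)$ block $-c_{ij}I$ for $j\in\mathcal{N}^p_i$.

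The main obstacle is precisely this monotonicity estimate, and it is more delicate here than in \cite{Stankovic}: because the physical neighbor set $\mathcal{N}^p_i$ need not be symmetric (unlike the communication graph), $\mathbf{C}$ is a nonsymmetric Laplacian-type matrix and $\mathbf{C}+\mathbf{C}^T$ need not be positive semidefinite. Rather than hoping the coupling is PSD, I would bound it directly: writing $w=\mathbf{x}-\mathbf{z}$ and $w^T\mathbf{C}w=\sum_i\sum_{j\in\mathcal{N}^p_i}c_{ij}(||w_i||^2-w_i^Tw_j)$, applying $w_i^Tw_j\le\tfrac{1}{2}(||w_i||^2+||w_j||^2)$, and reindexing the resulting sum over incoming edges, one obtains a lower bound whose only possibly negative part at node $i$ is controlled by $\sum_{k:\,i\in\mathcal{N}^p_k}c_{ki}$. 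Combining this with $2w^T\mathbf{R}w\ge 2\min_i\lambda_{min}(r_{ii})||w||^2$, the positive definiteness of each $r_{ii}$ (reinforced by the strict diagonal dominance hypothesis, which keeps $\lambda_{min}(r_{ii})$ bounded away from zero relative to the coupling weights) dominates the indefinite coupling term and yields $w^TAw\ge m||w||^2$ for some $m>0$, establishing Assumption \ref{Assu2}.

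Finally, I would reconcile the fact that \eqref{eq16} is a planar game ($x_i\in\mathbb{R}^2$) with the scalar-action formulation of Theorem \ref{th2}. The clean route is to view the network as a $2N$-action game: the dynamics \eqref{fis_un} and the observer \eqref{eq11}--\eqref{eq12} act coordinatewise (the gains $k_i^s,c_i,\beta_i$ and the signum term apply per component), while the monotonicity matrix $A$ retains exactly the block structure verified above. Hence the Lyapunov construction underlying Theorem \ref{th2} transfers with $N$ replaced by $2N$ in the dimension bookkeeping, and with Assumptions \ref{ASS1}--\ref{ass_3} and \ref{ass_5} all in force the claim of Corollary \ref{co3} follows from Theorem \ref{th2}.
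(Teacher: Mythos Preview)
Your plan to invoke Theorem~\ref{th2} as a black box by verifying Assumption~\ref{Assu2} for the connectivity game has a genuine gap: under the stated hypotheses, Assumption~\ref{Assu2} can fail. The only structural assumption on the data is that each $r_{ii}$ is symmetric positive definite and strictly diagonally dominant; nothing ties $\lambda_{min}(r_{ii})$ to the coupling weights $c_{ij}$ or to the incoming weights $\sum_{k:\,i\in\mathcal{N}^p_k}c_{ki}$, so your claim that ``the positive definiteness of each $r_{ii}$ \ldots dominates the indefinite coupling term'' is not justified. Concretely, with $N=2$, $r_{11}=r_{22}=I_2$, $\mathcal{N}^p_1=\{2\}$, $c_{12}=100$, and $\mathcal{N}^p_2=\emptyset$, the pseudo-gradient Jacobian is
\[
\mathcal{R}=\begin{pmatrix}202\,I_2 & -200\,I_2\\ 0 & 2\,I_2\end{pmatrix},
\]
which is strictly row diagonally dominant (hence all eigenvalues have positive real part), but $\mathcal{R}+\mathcal{R}^T$ has $2\times 2$ block pattern $\bigl(\begin{smallmatrix}404&-200\\-200&4\end{smallmatrix}\bigr)\otimes I_2$ with determinant $404\cdot 4-200^2<0$, so strong monotonicity fails. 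Your Young-inequality bound correctly isolates the troublesome incoming sum, but the hypotheses simply do not let you absorb it.

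The paper's proof does \emph{not} try to recover Assumption~\ref{Assu2}. Instead, as in the proof of Corollary~\ref{co2}, it exploits only that $\mathcal{R}$ is strictly diagonally dominant with positive diagonal, hence (by Gershgorin) $-\mathcal{R}$ is Hurwitz, so there exist symmetric positive definite $\Gamma_1,\Gamma_2$ with $\Gamma_1\mathcal{R}+\mathcal{R}^T\Gamma_1=\Gamma_2$. It then reruns the Lyapunov argument of Theorem~\ref{th2} with the single modification $V_4=\mathbf{\xi}^T\Gamma_1\mathbf{\xi}$ in place of $V_4=\tfrac12\mathbf{\xi}^T\mathbf{\xi}$; differentiating gives $-\mathbf{\xi}^T\Gamma_2\mathbf{\xi}$ as the dissipative term, which replaces the $-m\|\mathbf{\xi}\|^2$ that Assumption~\ref{Assu2} would have supplied. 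In short, the route is to weaken the monotonicity requirement to a Hurwitz condition on $\mathcal{R}$ and compensate by changing the Lyapunov weight on $\mathbf{\xi}$, rather than to force Assumption~\ref{Assu2} to hold.
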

 \begin{proof}
 The proof is similar to the proof of Theorem \ref{th2} by replacing $V_4$ therein with
 \begin{equation}
 V_4=\mathbf{\xi}^T\Gamma_1\mathbf{\xi},
 \end{equation}
 where $\Gamma_1$ is defined in the proof of Corollary \ref{co2}.
 \end{proof}

\section{Numerical Examples}\label{p_1_numer}
In this section, we consider a network of $5$ players equipped with the communication graph given in Fig. \ref{d_d1}. In the following, we will simulate the connectivity control game and a non-quadratic game, successively.

\begin{figure}[htb!]
\begin{center}
\vspace{4mm}
\scalebox{0.4}{\includegraphics{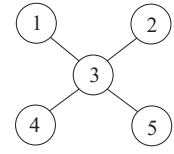}}
\caption{The communication graph among the players.}\label{d_d1}
\end{center}
\end{figure}
\subsection{Connectivity control of sensor networks}
\begin{example}
Consider the connectivity control game in \eqref{eq16} in which $r_{ii}=\left[
                                                                  \begin{array}{cc}
                                                                    i & 0 \\
                                                                    0 & i \\
                                                                  \end{array}
                                                                \right]$, $r_i=[i\ \ i]^T,$ $b_i=i$.
In addition, $l^g_1(\mathbf{x})=||x_1-x_2||^2$, $l^g_2(\mathbf{x})=||x_2-x_3||^2$, $l^g_3(\mathbf{x})=||x_3-x_2||^2$ $l^g_4(\mathbf{x})=||x_4-x_2||^2+||x_4-x_5||^2$ and $l^g_5(\mathbf{x})=||x_5-x_1||^2.$
The game has a unique Nash equilibrium on which $x_{ij}^*=-\frac{1}{2}$ for $i\in\{1,2,\cdots,5\},j\in\{1,2\}.$
\end{example}

In the following, we will firstly simulate the case in which the sensors' dynamics are subject to disturbances only, followed by the case in which the sensors' dynamics are subject to both unmodeled terms and time-varying disturbances.

\subsubsection{Sensors subject to external disturbances in their dynamics}\label{s1}
Suppose that the sensors' dynamics are given by \eqref{fir_dis}
and the external disturbance of sensor $i$ is a sinusoidal function that is of amplitude $i$ and frequency $i$.

Initialized at $\mathbf{x}(0)=[-10,2,-3,-8,-5,6,0,-8,-1,10]^T,$ the simulation results are given in Figs. \ref{trajec_bound}-\ref{obser_dist_bound} by utilizing the method in \eqref{eq8}-\eqref{eq9}. From Fig. \ref{trajec_bound}, it can be concluded that the sensors' positions converge to a small neighborhood of the Nash equilibrium. The disturbance observation errors are given in Fig.  \ref{obser_dist_bound} from which we see that the errors converge to a small neighborhood of zero. Hence, the effectiveness of the proposed method in \eqref{eq8}-\eqref{eq9} is verified.

\begin{figure}[htb!]
\centering
\scalebox{0.5}{\includegraphics{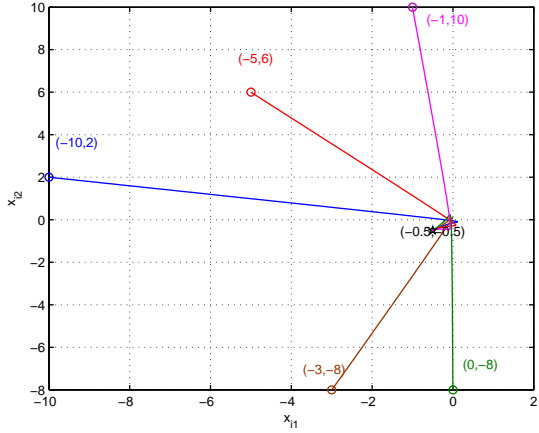}}
\caption{The trajectories of the sensors' positions produced by \eqref{eq8}-\eqref{eq9}.}\label{trajec_bound}
\end{figure}
\begin{figure}[htb!]
\centering
\scalebox{0.5}{\includegraphics{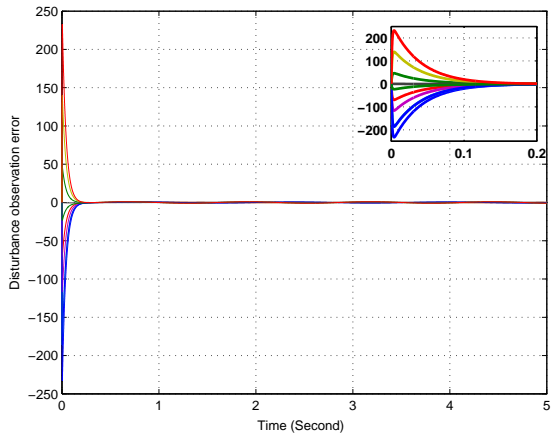}}
\caption{The disturbance observation errors.}\label{obser_dist_bound}
\end{figure}

To further illustrate the robustness of the PI-based method towards disruption of communication among the sensors, we suppose that for $t\in(0.01,2),$ $a_{ij}=0,$ for all $i,j\in\mathcal{N}$ by which the sensors can not receive anything from others. In this case, the trajectories of the sensors' positions  generated by \eqref{eq8}-\eqref{eq9} are plotted in Fig. \ref{case1_lost}, from which we see that the players' actions can still be steered to a small neighborhood of the Nash equilibrium thus numerically verifying the robustness of the PI-based method towards the disruption of communication.
\begin{figure}[htb!]
\centering
\scalebox{0.5}{\includegraphics{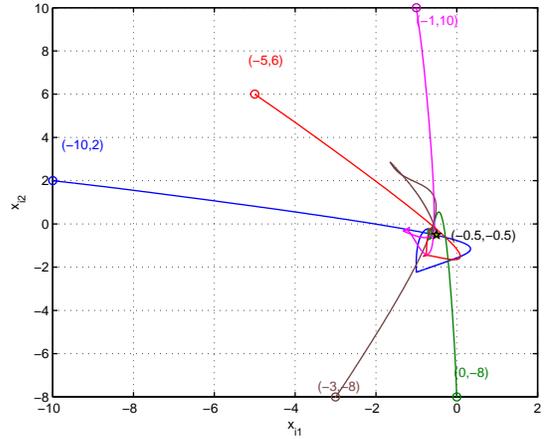}}
\caption{The trajectories of the sensors' positions generated by \eqref{eq8}-\eqref{eq9} under disrupted communication.}\label{case1_lost}
\end{figure}

\subsubsection{Sensors subject to both unmodeled and disturbance terms in their dynamics}\label{s2}

In this section, we suppose that the sensors' dynamics are given by \eqref{fis_un}.

In the dynamics of $x_{ij}$ for $i\in\{1,2,\cdots,5\},j\in\{1,2\}$, the unmodeled and disturbance terms (i.e., $g_i(\mathbf{x})+d_i(t)$) are $\sin(t)+x_{21},\sin(t)+x_{22},2\sin(2t)+x_{11}^2+x_{31},2\sin(2t)+x_{22},3\sin(3t)+x_{31},3\sin(3t)+x_{32},4\sin(4t)+x_{41},4\sin(4t)+x_{42},5\sin(5t)+x_{51},$ and $5\sin(5t)+x_{52},$ respectively. With $\mathbf{x}(0)=[-10,2,-3,-8,-5,6,0,-8,-1,10]^T,$ the simulation results are presented in Figs. \ref{trajec_unmodel}-\ref{obser_dist_unmodel} by utilizing the method in \eqref{eq11}-\eqref{eq12}. Fig. \ref{trajec_unmodel} shows the evolution of the sensors' positions. Fig. \ref{obser_dist_unmodel} depicts the observation errors of the unmodeled and disturbance terms. The simulation results demonstrate that driven by the proposed method, the sensors' positions would converge to the Nash equilibrium.

\begin{figure}[htb!]
\centering
\scalebox{0.5}{\includegraphics{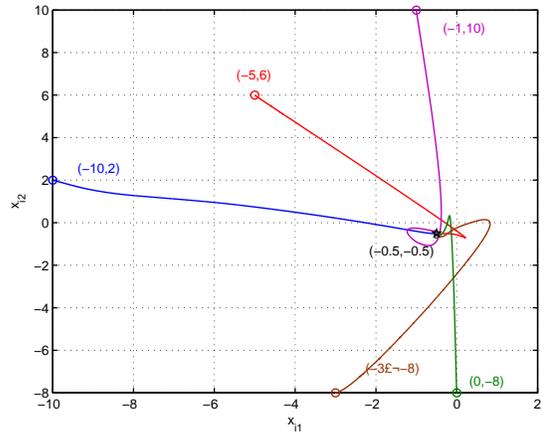}}
\caption{The trajectories of the sensors' positions produced by \eqref{eq11}-\eqref{eq12}.}\label{trajec_unmodel}
\end{figure}
\begin{figure}[htb!]
\centering
\scalebox{0.5}{\includegraphics{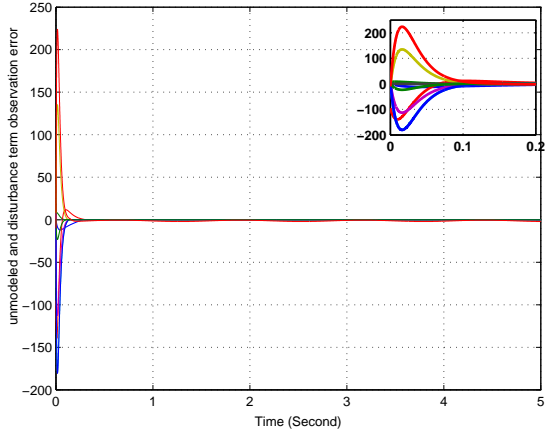}}
\caption{The unmodeled dynamics and disturbance term observation errors.}\label{obser_dist_unmodel}
\end{figure}
Moreover, if the communication among the sensors is lost for $t\in(0.01,2),$ the sensors' positions generated by \eqref{eq11}-\eqref{eq12} are plotted in Fig. \ref{case2_lost}. From Fig. \ref{case2_lost}, it is clear that the sensors' positions can be driven to the Nash equilibrium by the RISE-based method, which numerically verifies the robustness of the RISE-based method towards disrupted communication.
\begin{figure}[htb!]
\centering
\scalebox{0.5}{\includegraphics{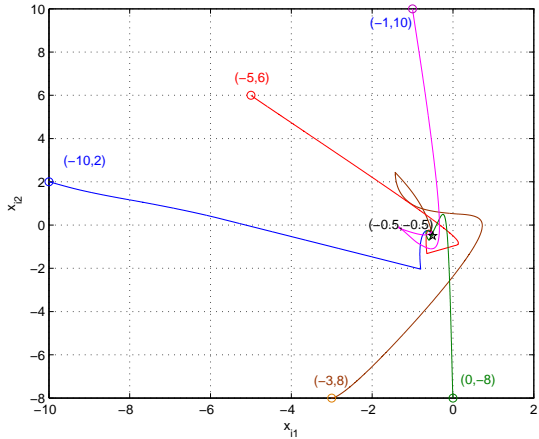}}
\caption{The trajectories of the sensors' positions generated by \eqref{eq11}-\eqref{eq12} under disrupted communication.}\label{case2_lost}
\end{figure}

\subsection{Games with non-quadratic cost functions}
\begin{example}
The example settings are the same as those in Example 1 except that
\begin{equation}
f_1(\mathbf{x})=x_1^Tx_1+x_{11}+x_{12}+1+10e^{x_{11}}+||x_1-x_2||^2.
\end{equation}
Through direct calculation, $\mathbf{x}^*=[-1.2304,-0.5,-0.5,-0.5,-0.5,-0.5,-0.5203,-0.5,$ $-0.6217,-0.5]^T.$
\end{example}

\subsubsection{Games with disturbances in the players' dynamics}
The simulation settings are the same as those in Section  \ref{s1}. The simulation results produced by the method in \eqref{eq8}-\eqref{eq9} are given in Figs. \ref{tra_nonlinear2}-\ref{obser22}. Fig. \ref{tra_nonlinear2} illustrates the players' actions from which we see that the players' actions are driven to a small neighborhood of the Nash equilibrium. Moreover, the observation errors tend to a small neighborhood of zero as shown in Fig. \ref{obser22}. The simulation results verify the theoretical result in Theorem \ref{th1}.
\begin{figure}[htb!]
\centering
\scalebox{0.6}{\includegraphics{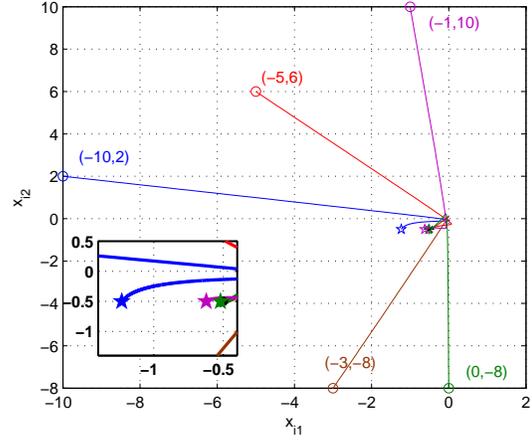}}
\caption{The trajectories of the players' actions produced by \eqref{eq8}-\eqref{eq9}.}\label{tra_nonlinear2}
\end{figure}
\begin{figure}[htb!]
\centering
\scalebox{0.5}{\includegraphics{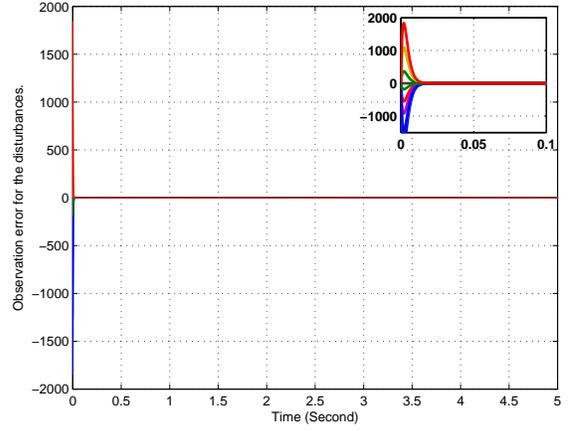}}
\caption{The disturbance observation errors.}\label{obser22}
\end{figure}

Moreover, even if the communication is lost for $t\in(0.01,2),$ the sensors' positions generated by the PI-based method still converge to a small neighborhood of the Nash equilibrium as shown in Fig. \ref{case4_lost}.
\begin{figure}[htb!]
\centering
\scalebox{0.5}{\includegraphics{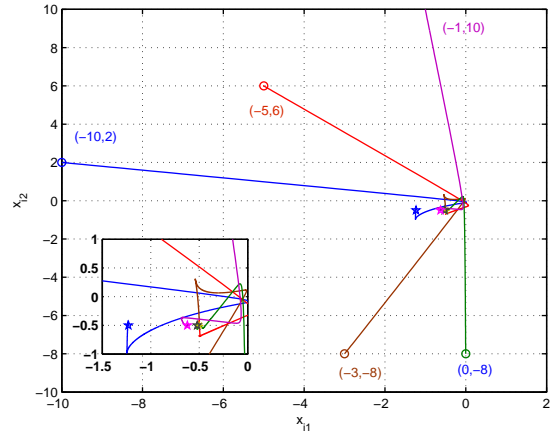}}
\caption{The trajectories of the players' positions generated by \eqref{eq8}-\eqref{eq9} under disrupted communication.}\label{case4_lost}
\end{figure}

\subsubsection{Games with unmodeled and disturbance terms in the players' dynamics}

In this section, the simulation settings follow those in Section \ref{s2}.  The simulation results produced by \eqref{eq11}-\eqref{eq12} are presented in Figs. \ref{tra_nonlinear1}-\ref{obser12}, which plot the players' trajectories and their unmodeled and disturbance term observation errors, respectively. From the simulation results, we see that the observation errors go to zero as $t\rightarrow \infty$ and the players' actions tend to the Nash equilibrium as $t\rightarrow \infty$ thus validating the method in \eqref{eq11}-\eqref{eq12}.

\begin{figure}[htb!]
\centering
\scalebox{0.6}{\includegraphics{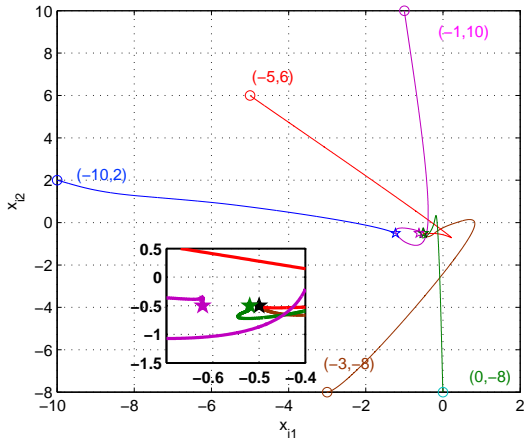}}
\caption{The trajectories of the players' actions produced by \eqref{eq11}-\eqref{eq12}.}\label{tra_nonlinear1}
\end{figure}
\begin{figure}[htb!]
\centering
\scalebox{0.5}{\includegraphics{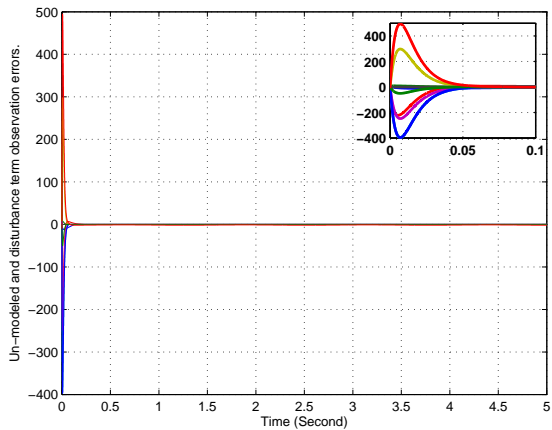}}
\caption{The unmodeled dynamics and disturbance term observation errors.}\label{obser12}
\end{figure}

Furthermore, if the communication is lost for $t\in(0.01,2),$ the sensors' positions generated by the RISE-based method  converge to the Nash equilibrium as shown in Fig. \ref{case3_lost}.
\begin{figure}[htb!]
\centering
\scalebox{0.48}{\includegraphics{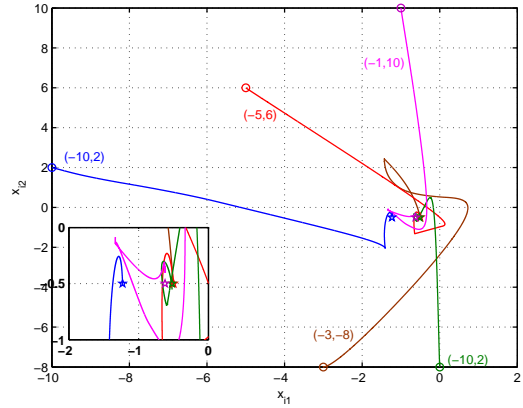}}
\caption{The trajectories of the players' positions generated by \eqref{eq11}-\eqref{eq12} under disrupted communication.}\label{case3_lost}
\end{figure}

\section{Conclusions}\label{conc}
This paper considers robust Nash equilibrium seeking for games in which there are unmodeled and disturbance terms in the players' dynamics. To accommodate the unmodeled and disturbance terms, two extended state observers are introduced into the distributed Nash equilibrium seeking strategy. The first observer is based on PI control. By utilizing the PI-based observer, it is shown that the proposed method can drive the players' actions to a small neighborhood of the Nash equilibrium. Moreover, to further enhance the convergence result, a RISE-based observer is employed in the distributed Nash equilibrium seeking strategy. By Lyapunov stability analysis, it is proven that the method can drive the players' actions to the Nash equilibrium asymptotically under certain conditions. Lastly, the proposed method is applied to the connectivity control of mobile sensor networks. Experimental verifications of the proposed methods and applications in the smart grid (see, e.g., \cite{Hua2019}\cite{Shi2019}) will be considered as future works.

%
%
%
%

\subsection{Proof of Theorem \ref{th1}}\label{prof_th1}
 Define $\bar{\mathbf{\zeta}}_1=\mathbf{\zeta}_1,\bar{\mathbf{\zeta}}_2=\frac{\mathbf{\zeta}_2}{\sigma}.$ Then, it follows from \eqref{eq6} that
\begin{equation}\label{eq7}
\dot{\bar{\mathbf{\zeta}}}=-\sigma \bar{\Delta}\bar{\mathbf{\zeta}}+\frac{\mathbf{d}_1(t)}{\sigma},
\end{equation}
where $\bar{\mathbf{\zeta}}=[\bar{\mathbf{\zeta}}_1^T,\bar{\mathbf{\zeta}}_2^T]^T$ and $\bar{\Delta}=\left[
                    \begin{array}{cc}
                      \mathbf{k}_1 & -I_{N\times N} \\
                      \mathbf{k}_2 & \mathbf{0}_{N\times N} \\
                    \end{array}
                  \right].
$
Then, it can be easily verified that $-\bar{\Delta}$ is Hurwitz. Hence, there exist symmetric positive definite matrices $P_1,Q_1$ such that
\begin{equation}
P_1\bar{\Delta}+\bar{\Delta}^T P_1=Q_1,
\end{equation}
by Theorem 4.6 in \cite{Khalil02}.

According to \eqref{eq7}, the evolution of $\bar{\mathbf{\zeta}}$ is independent of $\mathbf{\eta}$ and $\mathbf{\xi}.$ Hence, define
\begin{equation}
V_1=\bar{\mathbf{\zeta}}^TP_1\bar{\mathbf{\zeta}}.
\end{equation}

Then, for $l_1=2||P_1||$, we can get that
\begin{equation}
\begin{aligned}
\dot{V}_1=&-\sigma \bar{\mathbf{\zeta}}^T Q_1 \bar{\mathbf{\zeta}}+\frac{2\bar{\mathbf{\zeta}}^TP_1 \mathbf{d}_1(t)}{\sigma}\\
\leq & -\sigma \lambda_{min}(Q_1)||\bar{\mathbf{\zeta}}||^2+\frac{l_1}{\sigma}||\bar{\mathbf{\zeta}}||||\mathbf{d}_1(t)||.
\end{aligned}
\end{equation}

By further noticing that
\begin{equation}
\lambda_{min}(P_1)||\bar{\mathbf{\zeta}}||^2\leq V_1 \leq \lambda_{max}(P_1)||\bar{\mathbf{\zeta}}||^2,
\end{equation}
it follows from Theorem 4.19 in \cite{Khalil02} that
\begin{equation}
\begin{aligned}
||\bar{\mathbf{\zeta}}(t)||\leq &\bar{\beta}(||\bar{\mathbf{\zeta}}(t_0)||,\sigma (t-t_0))\\
&+\frac{2l_1 sup_{t\geq t_0}||\mathbf{d}_1(t)||}{\sigma^2 \lambda_{min}(Q_1)} \sqrt{\frac{\lambda_{max}(P_1)}{\lambda_{min}(P_1)}},
\end{aligned}
\end{equation}
where $\bar{\beta}\in \mathcal{KL}.$

Hence, for any $l_2>\frac{2l_1 sup_{t\geq t_0}||\mathbf{d}_1(t)||}{ \lambda_{min}(Q_1)} \sqrt{\frac{\lambda_{max}(P_1)}{\lambda_{min}(P_1)}},$ there exists a positive constant $T\geq t_0$ such that
\begin{equation}
||\bar{\mathbf{\zeta}}(t)||\leq \frac{l_2}{\sigma^2},\forall t>T.
\end{equation}
Recalling the definition of $\bar{\mathbf{\zeta}},$ we can obtain that
\begin{equation}
||\mathbf{\zeta}_2(t)||\leq \frac{l_2}{\sigma},\forall t>T.
\end{equation}

To analyze the evolution of $\mathbf{\eta}$ and $\mathbf{\xi}$, define
\begin{equation}
V_2=\mathbf{\eta}^T\mathcal{P}\mathbf{\eta}+\frac{1}{2}\mathbf{\xi}^T\mathbf{\xi},
\end{equation}
and $\mathcal{P}$ is a symmetric positive definite matrix that satisfies
\begin{equation}
\mathcal{P}(\bar{\Theta}(\mathcal{L}\otimes I_{N\times N}+\mathcal{A}_0))+(\bar{\Theta}(\mathcal{L}\otimes I_{N\times N}+\mathcal{A}_0))^T\mathcal{P}=\mathcal{Q},
\end{equation}
where $\mathcal{Q}$ is a symmetric positive definite matrix \cite{YeTcyber18}.
Then, for $\bar{\phi}$, where $\bar{\phi}=[\mathbf{\eta}^T,\mathbf{\xi}^T]^T$, that belongs to a sufficiently large compact set $D$ that contains the origin, there are positive constants $l_i,\forall i\in\{3,4,\cdots,6\}$ (depending on  $D$) such that,
\begin{equation}
\begin{aligned}
\dot{V}_2\leq &-\theta\lambda_{min}(\mathcal{Q})||\mathbf{\eta}||^2-2\mathbf{\eta}^T\mathcal{P}\mathbf{1}_N\otimes \dot{\mathbf{\xi}}\\
&-m||\mathbf{\xi}||^2-\mathbf{\xi}^T\left[\frac{\partial f_i}{\partial x_i}(\mathbf{y}_i)-\frac{\partial f_i}{\partial x_i}(\mathbf{x})\right]_{vec}+\mathbf{\xi}^T\mathbf{\zeta}_2\\
\leq &-\theta\lambda_{min}(\mathcal{Q})||\mathbf{\eta}||^2+l_3||\mathbf{\eta}||^2+l_4||\mathbf{\eta}||||\mathbf{\xi}||\\
&+l_5||\mathbf{\eta}||||\mathbf{\zeta}_2||-m||\mathbf{\xi}||^2+l_6||\mathbf{\xi}||||\mathbf{\eta}||+||\mathbf{\xi}||||\mathbf{\zeta}_2||,
\end{aligned}
\end{equation}
by utilizing Assumptions \ref{ASS1}-\ref{Assu2}.

Define $A_1=\left[
              \begin{array}{cc}
                \theta \lambda_{min}(\mathcal{Q})-l_3   & -\frac{l_4+l_6}{2} \\
                -\frac{l_4+l_6}{2} & m \\
              \end{array}
            \right].
$
Then,
\begin{equation}\label{eq10}
\begin{aligned}
\dot{V}_2\leq &
-\lambda_{min}(A_1)||\bar{\phi}||^2+l_5||\mathbf{\eta}||||\mathbf{\zeta}_2||+||\mathbf{\xi}||||\mathbf{\zeta}_2||\\
=& -\lambda_{min}(A_1)||\bar{\phi}||^2+(l_5+1)||||\bar{\phi}||||\mathbf{\zeta}_2||
\end{aligned}
\end{equation}
where $\lambda_{min}(A_1)>0$ given that $\theta>\frac{(l_4+l_6)^2+4ml_3}{4m \lambda_{min}(\mathcal{Q})}$.

Hence, by choosing $\theta>\frac{(l_4+l_6)^2+4ml_3}{4m \lambda_{min}(\mathcal{Q})}$,
\begin{equation}
\dot{V}_2\leq -\frac{\lambda_{min}(A_1)}{2}||\bar{\phi}||^2,\forall ||\bar{\phi}||\geq \frac{2(l_5+1)\sup_{t\geq t_0}||\mathbf{\zeta}_2||}{\lambda_{min}(A_1)}.
\end{equation}

Moreover,
\begin{equation}
\min\{\lambda_{min}(\mathcal{P}),\frac{1}{2}\}||\bar{\phi}||^2\leq V_2\leq \max\{\lambda_{max}(\mathcal{P}),\frac{1}{2}\}||\bar{\phi}||^2.
\end{equation}
Choose $r$ such that $B_r\in D$, where $B_r$ is a origin-centered ball whose radius is equal to $r$, and
\begin{equation}
\frac{2(l_5+1)\sup_{t\geq t_0}||\mathbf{\zeta}_2(t)||}{\lambda_{min}(A_1)}<\frac{\min\{\lambda_{min}(\mathcal{P}),\frac{1}{2}\}}{\max\{\lambda_{max}(\mathcal{P}),\frac{1}{2}\}}r.
\end{equation}

Then, for $||\bar{\phi}(t_0)||\leq \frac{\min\{\lambda_{min}(\mathcal{P}),\frac{1}{2}\}}{\max\{\lambda_{max}(\mathcal{P}),\frac{1}{2}\}}r$, there exists a $T_1\geq t_0$ such that for $t_0<t\leq T_1,$
\begin{equation}
||\bar{\phi}(t)||\leq \sqrt{\frac{ \max\{\lambda_{max}(\mathcal{P}),\frac{1}{2}\}}{\min\{\lambda_{min}(\mathcal{P}),\frac{1}{2}\}}}||\bar{\phi}(t_0)||
\end{equation}
and
\begin{equation}
\begin{aligned}
||\bar{\phi}(t)||\leq &\sqrt{\frac{\max\{\lambda_{max}(\mathcal{P}),\frac{1}{2}\}}{\min\{\lambda_{min}(\mathcal{P}),\frac{1}{2}\}}} \frac{2(l_5+1)\sup_{t\geq 0}||\mathbf{\zeta}_2(t)||}{\lambda_{min}(A_1)},\\
&\forall t\geq T_1,
\end{aligned}
\end{equation}
according to Theorem 4.18 in \cite{Khalil02}.

Hence, if $T\leq T_1,$ we have
\begin{equation}
\begin{aligned}
||\bar{\phi}(T)||\leq & \sqrt{\frac{\max\{\lambda_{max}(\mathcal{P}),\frac{1}{2}\}}{\min\{\lambda_{min}(\mathcal{P}),\frac{1}{2}\}}}||\bar{\phi}(t_0)||\\
\leq &\sqrt{\frac{\min\{\lambda_{min}(\mathcal{P}),\frac{1}{2}\}}{\max\{\lambda_{max}(\mathcal{P}),\frac{1}{2}\}}}r.
\end{aligned}
\end{equation}
Moreover, if $T>T_1$, we have
\begin{equation}
\begin{aligned}
||\bar{\phi}(T)||&\leq \sqrt{\frac{\max\{\lambda_{max}(\mathcal{P}),\frac{1}{2}\}}{\min\{\lambda_{min}(\mathcal{P}),\frac{1}{2}\}}} \frac{2(l_5+1)\sup_{t\geq t_0}||\mathbf{\zeta}_2(t)||}{\lambda_{min}(A_1)}\\
&\leq \sqrt{\frac{\min\{\lambda_{min}(\mathcal{P}),\frac{1}{2}\}}{\max\{\lambda_{max}(\mathcal{P}),\frac{1}{2}\}}}r.
\end{aligned}
\end{equation}

Utilizing Theorem 4.18 in \cite{Khalil02} again, we can conclude that there exists $T_2\geq 0$ such that
\begin{equation}
\begin{aligned}
||\bar{\phi}(t)||\leq &\bar{\beta}_2(||\bar{\phi}(T)||,t-T),\forall T\leq t\leq T+T_2\\
||\bar{\phi}(t)||\leq &\sqrt{\frac{\max\{\lambda_{max}(\mathcal{P}),\frac{1}{2}\}}{\min\{\lambda_{min}(\mathcal{P}),\frac{1}{2}\}}} \frac{2(l_5+1)\sup_{t\geq T}||\mathbf{\zeta}_2(t)||}{\lambda_{min}(A_1)}\\
&\forall t\geq T+T_2,
\end{aligned}
\end{equation}
where $\bar{\beta}_2\in\mathcal{KL}.$

Recalling that for $t>T$, $||\mathbf{\zeta}_2(t)||\leq \frac{l_2}{\sigma},$ we arrive at the conclusion.
\subsection{Proof of Theorem \ref{th2}}\label{prof_th2}

To facilitate the closed-loop system analysis, define
\begin{equation}
V=V_1+V_2+V_3+V_3+V_5
\end{equation}
where
$V_1=\frac{1}{2}\mathbf{\zeta}_1^T\mathbf{\zeta}_1,$
$V_2=\frac{1}{2}\mathbf{\gamma}^T\mathbf{\gamma},$
$V_3=\mathbf{\zeta}_1^T(t_0)\beta sgn(\mathbf{\zeta}_1(t_0))-\mathbf{\zeta}_1^T(t_0)N_c(t_0)-\int_{t_0}^{t}(\mathbf{\gamma}^T(N_c(\tau)-\beta sgn(\mathbf{\zeta}_1)))d\tau,$
$V_4=\frac{1}{2}\mathbf{\xi}^T\mathbf{\xi},$ $V_5=\mathbf{\eta}^T\mathcal{P}\mathbf{\eta}$, $N_c(t)=[\dot{d}_i(t)]_{vec}$  and $\mathcal{P}$ is defined in the proof of Theorem \ref{th1}.

Then,
\begin{equation}
\begin{aligned}
\dot{V}_1=&\mathbf{\zeta}_1^T (-(k_s+c)\mathbf{\zeta}_1+\mathbf{\zeta}_2)\\
= &-\mathbf{\zeta}_1^T(k_s+c)\mathbf{\zeta}_1+\mathbf{\zeta}_1^T\mathbf{\zeta}_2,
\end{aligned}
\end{equation}
and
\begin{equation}
\begin{aligned}
&\dot{V}_2=\mathbf{\gamma}^T\left(-k_s\mathbf{\gamma}-\beta sgn(\mathbf{\zeta}_1)+[\dot{g}_i(\mathbf{x})]_{vec}+[\dot{d}_i(t)]_{vec}\right)\\
&=-\mathbf{\gamma}^Tk_s\mathbf{\gamma}+\mathbf{\gamma}^T(-\beta sgn(\mathbf{\zeta}_1)+[\dot{g}_i(\mathbf{x})]_{vec}+[\dot{d}_i(t)]_{vec}).
\end{aligned}
\end{equation}
Moreover, for $\mathcal{E}_f$ that belongs to a compact set $D$ that contains the origin,
\begin{equation}
\dot{V}_3=-\mathbf{\gamma}^T(N_c(t)-\beta sgn(\mathbf{\zeta}_1)),
\end{equation}
and
\begin{equation}
\begin{aligned}
\dot{V}_4=&\mathbf{\xi}^T\left(\left[-\frac{\partial f_i}{\partial x_i}(\mathbf{y}_i)\right]_{vec}+\mathbf{\zeta}_2\right)\\
\leq &-m||\mathbf{\xi}||^2+l_1||\mathbf{\xi}||||\mathbf{\eta}||+||\mathbf{\xi}||||\mathbf{\zeta}_2||,
\end{aligned}
\end{equation}
for some positive constant $l_1$ (depending on $D$) based on Assumption \ref{ASS1}.

Furthermore, by utilizing Assumption \ref{ASS1}, it can be concluded that there are positive constants $l_2,l_3$ and $l_4$ (depending on $D$) such that
\begin{equation}
\begin{aligned}
\dot{V}_5=&-\theta \mathbf{\eta}^T\mathcal{Q}\mathbf{\eta}-2\mathbf{\eta}^T\mathcal{P}\mathbf{1}_N\otimes \dot{\mathbf{x}}\\
\leq &-\theta \lambda_{min}(\mathcal{Q})||\mathbf{\eta}||^2-2\mathbf{\eta}^T\mathcal{P}\mathbf{1}_N\otimes \dot{\mathbf{x}}\\
\leq & -\theta \lambda_{min}(\mathcal{Q})||\mathbf{\eta}||^2+l_2||\mathbf{\eta}||^2\\
&+l_3||\mathbf{\eta}||\mathbf{\xi}||+l_4||\mathbf{\eta}||||\mathbf{\zeta}_2||.
\end{aligned}
\end{equation}

Therefore, for $\mathcal{E}_f\in D$,
\begin{equation}
\begin{aligned}
\dot{V}\leq & -\mathbf{\zeta}_1^T(c+k_s)\mathbf{\zeta}_1+\mathbf{\zeta}_1^T\mathbf{\zeta}_2-\lambda_{min}(k_s)||\mathbf{\gamma}||^2\\
&-m||\mathbf{\xi}||^2+l_1||\mathbf{\xi}||||\mathbf{\eta}||-\theta \lambda_{min}(\mathcal{Q})||\mathbf{\eta}||^2\\
&+l_2||\mathbf{\eta}||^2+l_3||\mathbf{\eta}||\mathbf{\xi}||+l_4||\mathbf{\eta}||||\mathbf{\zeta}_2||+l_5||\mathbf{\gamma}||||\mathbf{\eta}||\\
&+l_5||\mathbf{\gamma}||||\mathbf{\xi}||+l_6||\mathbf{\gamma}||||\mathbf{\zeta}_2||+||\mathbf{\xi}||||\mathbf{\zeta}_2||,
\end{aligned}
\end{equation}
by noticing that for $\mathcal{E}_f\in D$, $\frac{\partial g_i(\mathbf{x})}{\partial x_j}$ for all $i,j\in\mathcal{N}$ are bounded according to Assumption \ref{ass_5}, and there are positive constants $l_5,l_6$ (depending on $D$) such that
\begin{equation}
\begin{aligned}
&\mathbf{\gamma}^T[\dot{g}_i(\mathbf{x})]_{vec}=\mathbf{\gamma}^T\left[\frac{\partial g_i(\mathbf{x})}{\partial \mathbf{x}}\right]_{vec}^T\dot{\mathbf{x}}\\
&\leq l_5||\mathbf{\gamma}||||\mathbf{\eta}||+l_5||\mathbf{\gamma}||||\mathbf{\xi}||+l_6||\mathbf{\gamma}||||\mathbf{\zeta}_2||.
\end{aligned}
\end{equation}

Noticing that $\mathbf{\gamma}=\dot{\mathbf{\zeta}}_1+c \mathbf{\zeta}_1,$ we have,
\begin{equation}
\mathbf{\gamma}=\dot{\mathbf{\zeta}}_1+c \mathbf{\zeta}_1=-k_s\mathbf{\zeta}_1+\mathbf{\zeta}_2.
\end{equation}
Therefore,
\begin{equation}\label{eq1}
\mathbf{\zeta}_2=\mathbf{\gamma}+k_s \mathbf{\zeta}_1.
\end{equation}

Define $A_1=\left[
              \begin{array}{cc}
                m & -\frac{l_1+l_3}{2} \\
                -\frac{l_1+l_3}{2} & \theta \lambda_{min}(\mathcal{Q})-l_2 \\
              \end{array}
            \right]
$ and choose $\theta>\frac{(l_1+l_3)^2+4ml_2}{4m\lambda_{min}(\mathcal{Q})},$ then,

\begin{equation}
\begin{aligned}
\dot{V}\leq &-\lambda_{min}(c)\mathbf{\zeta}_1^T\mathbf{\zeta}_1-(\lambda_{min}(k_s)-l_6)||\mathbf{\gamma}||^2\\
&-\lambda_{min}(A_1)||\mathbf{\phi}||^2+(l_4+1)||\mathbf{\phi}||||\mathbf{\gamma}||\\
&+||\mathbf{\zeta}_1||||\mathbf{\gamma}||+\max\{k_i^s\}(l_4+1)||\mathbf{\phi}||||\mathbf{\zeta}_1||\\
&+l_5||\mathbf{\gamma}||||\mathbf{\eta}||+l_5||\mathbf{\gamma}||||\mathbf{\xi}||+l_6||\mathbf{\gamma}||||k_s\mathbf{\zeta}_1||
\end{aligned}
\end{equation}
where $\mathbf{\phi}=[\mathbf{\xi}^T,\mathbf{\eta}^T]^T.$

Therefore,
\begin{equation}
\begin{aligned}
&\dot{V}\leq  -\left(\lambda_{min}(c)-\frac{1}{2}-\frac{(\max\{k_i^s\})^2(l_4+1)}{2\epsilon_1}\right.\\
&\ \ \ \ \ \ \ \  \ \ \ \ \ \ \ \ \ \  \ \ \ \ \ \ \ \ \ \ \  \ \ \left.-\frac{l_6(\max\{k_i^s\})^2}{2\epsilon_3}\right)||\mathbf{\zeta}_1||^2\\
&-\left(\lambda_{min}(k_s)-l_6-\frac{1}{2}-\frac{l_4+1+2l_5}{2\epsilon_2}-\frac{\epsilon_3l_6}{2}\right)||\mathbf{\gamma}||^2\\
&-\left(\lambda_{min}(A_1)-\frac{(l_4+1+2l_5)\epsilon_2}{2}-\frac{\epsilon_1(l_4+1)}{2}\right)||\mathbf{\phi}||^2,
\end{aligned}
\end{equation}
where $\epsilon_1,\epsilon_2,\epsilon_3$ are positive constants that can be arbitrarily chosen.

Hence, choose $\epsilon_1,\epsilon_2$ to be sufficiently small such that $\lambda_{min}(A_1)-\frac{(l_4+1+2l_5)\epsilon_1}{2}-\frac{\epsilon_2(l_4+1)}{2}>0$ and for fixed $\epsilon_1,\epsilon_2,\epsilon_3$ choose $k_i^s$ to be sufficiently large such that $\lambda_{min}(k_s)-\frac{1}{2}-\frac{l_4+1+2l_5}{2\epsilon_2}-\frac{\epsilon_3l_6}{2}>0$. Moreover, for fixed $\epsilon_1,\epsilon_2,\epsilon_3,k_s,$ choose $c_i$ to be sufficiently large such that $\lambda_{min}(c)-\frac{1}{2}-\frac{(\max\{k_i^s\})^2(l_4+1)}{2\epsilon_1}-\frac{l_6(\max\{k_i^s\})^2}{2\epsilon_3}>0$.

Then, for $\epsilon_4=\min\{\lambda_{min}(A_1)-\frac{(l_4+1+2l_5)\epsilon_1}{2}-\frac{\epsilon_2(l_4+1)}{2},\lambda_{min}(k_s)-l_6-\frac{1}{2}-\frac{l_4+1+2l_5}{2\epsilon_2}-\frac{\epsilon_3l_6}{2},\lambda_{min}(c)-\frac{1}{2}-\frac{(\max\{k_i^s\})^2(l_4+1)}{2\epsilon_1}-\frac{l_6(\max\{k_i^s\})^2}{2\epsilon_3}\},$ we have
\begin{equation}\label{eq}
\dot{V}\leq -\epsilon_4 ||\mathbf{\psi}||^2,
\end{equation}
where $\mathbf{\psi}=[\mathbf{\phi}^T,\mathbf{\zeta}_1^T,\mathbf{\gamma}^T]^T.$

Moreover, for fixed $c$, choose $\beta_i\geq\frac{\max\{c_i\}\sup_{t\geq t_0}||[\dot{d}_i(t)]_{vec}||_1}{\min\{c_i\}}+\frac{\sup_{t\geq t_0}||[\ddot{d}_i(t)]_{vec}||_1}{\min\{c_i\}},i\in \mathcal{N}$ then $V_3\geq 0$ by following the proof of Lemma 5 in  \cite{HuSCL12}. If this is the case, recalling the definition of $V$, we have $\min\{\frac{1}{2},\lambda_{min}(\mathcal{P})\}||\mathcal{E}_p(t)||^2\leq V\leq \max\{\frac{1}{2},\lambda_{max}(\mathcal{P})\}||\mathcal{E}_p(t)||^2,$
where $\mathcal{E}_p(t)=[\mathbf{\zeta}_1^T,\mathbf{\gamma}^T,\sqrt{V_3},\mathbf{\xi}^T,\mathbf{\eta}^T]^T.$ Furthermore, $\dot{V}$ is negative semi-definite and from \eqref{eq}, $\mathbf{\psi}$ is bounded for $t\in[t_0,\infty).$  Therefore, according to \eqref{eq1}, $\mathbf{\zeta}_2$ is bounded, from which we can further conclude that $\dot{\mathbf{x}}$ is bounded according to \eqref{eq5}. Moreover, $\dot{\mathbf{\zeta}}_1$ and $\dot{\mathbf{\zeta}}_2$ are bounded according to \eqref{eq2}. In addition, by \eqref{eq3} and \eqref{eq4}, we can derive that $\dot{\mathbf{\gamma}}$ and $\dot{\mathbf{\eta}}$ are also bounded. Lastly, by utilizing \eqref{eq5}, it's easy to verify that $\ddot{\mathbf{x}}$ is also bounded.

Recalling that
\begin{equation}
\begin{aligned}
\dot{V}=&-\mathbf{\zeta}_1^T(k_s+c)\mathbf{\zeta}_1+\mathbf{\zeta}_1^T\mathbf{\zeta}_2-\mathbf{\gamma}^Tk_s\mathbf{\gamma}\\
&+\mathbf{\xi}^T\left(-\left[\frac{\partial f_i}{\partial x_i}(\mathbf{y}_i)\right]_{vec}+\mathbf{\zeta}_1\right)\\
&-\mathbf{\eta}^T\mathcal{Q}\mathbf{\eta}-2\mathbf{\eta}^T\mathcal{P}\mathbf{1}_N\otimes \dot{\mathbf{x}}+\mathbf{\gamma}^T[\dot{g}_i(\mathbf{x})]_{vec}.
\end{aligned}
\end{equation}
It can be easily verified that $\ddot{V}$ is bounded. Hence, by the Babalat's Lemma, $||\mathbf{\psi}||\rightarrow 0$ as $t\rightarrow \infty.$ Recalling that $\mathbf{\zeta}_2=\mathbf{\gamma}+k_s\mathbf{\zeta}_1$, we arrive at the conclusion.

\subsection{Proof of Corollary \ref{co2}}\label{prof_co2}
Define
\begin{equation}\nonumber
\mathcal{R}=\left[
                      \begin{array}{cccc}
                        2r_{11}+h_{11} & C_{12} & \cdots & C_{1N} \\
                        C_{21} & 2r_{22}+h_{22} & \cdots & C_{2N} \\
                        \vdots& \vdots & \ddots & \vdots \\
                        C_{N1} & C_{N2} & \cdots & 2r_{NN}+h_{NN} \\
                      \end{array}
                    \right],
\end{equation}
where $h_{ii}=\left[
                \begin{array}{cc}
                  2\sum_{j\in \mathcal{N}_i^p} c_{ij} & 0 \\
                  0 & 2\sum_{j\in \mathcal{N}_i^p} c_{ij} \\
                \end{array}
              \right]$, $C_{ij}=\left[
                                      \begin{array}{cc}
                                        -2c_{ij} & 0 \\
                                        0 & -2c_{ij} \\
                                      \end{array}
                                    \right]
              $, $c_{ij}=0$ if $j\notin \mathcal{N}_i^p,$ and $c_{ij}>0$ if $j\in \mathcal{N}_i^p.$
Then, it can be easily seen that $\mathcal{R}$ is strictly diagonally dominant with its diagonal elements being positive as $r_{ii}$ is symmetric positive definite and strictly diagonally dominant. Therefore, there are symmetric positive definite matrices $\Gamma_1,\Gamma_2$ that satisfy
\begin{equation}
\Gamma_1 \mathcal{R}+\mathcal{R}^T\Gamma_1=\Gamma_2,
\end{equation}
according to the Gershgorin Circle Theorem and Theorem 4.6 in \cite{Khalil02}.

The rest of the proof follows the proof of Theorem \ref{th1} with $V_2$ therein replaced by
\begin{equation}
V_2=\mathbf{\eta}^T\mathcal{P}\mathbf{\eta}+\mathbf{\xi}^T\Gamma_1 \mathbf{\xi},
\end{equation}
and the details are omitted here.
%

\end{document}